\newcommand{\R}{\mathbb R}
\theoremstyle{definition}
\newtheorem{thm}{Theorem}[section]
\newtheorem{cor}[thm]{Corollary}
\newtheorem{prop}[thm]{Proposition}
\newtheorem{lem}[thm]{Lemma}
\newtheorem{rem}[thm]{Remark}
\newtheorem{exa}[thm]{Example}
\newcommand{\subjclass}[1]{\bigskip\noindent\emph{2010 Mathematics Subject Classification:}\enspace#1}
\numberwithin{equation}{section}
\begin{document}

\title{A degenerate elliptic equation for second order\\ controllability of nonlinear systems}
\author{Pierpaolo Soravia\thanks{email: soravia@math.unipd.it.  }\\
Dipartimento di Matematica\\ Universit\`{a} di Padova, via Trieste 63, 35121 Padova, Italy}

\date{}
\maketitle

\begin{abstract}
For a general nonlinear control system, we study the problem of small time local attainability of a target which is the closure of an open set. When the target is smooth and locally the sublevel set of a smooth function, we develop second order attainability conditions as explicit pointwise conditions on the vector fields at points where all the available vector fields are contained in the tangent space of its boundary. Our sufficient condition requires the function defining the target to be a strict supersolution of a second order degenerate elliptic equation and if satisfied, it allows to reach the target with a piecewise constant control with at most one switch. For symmetric systems, our sufficient condition is also necessary and can be reformulated as a suitable symmetric matrix having a negative eigenvalue. For nonlinear affine systems to obtain a necessary and sufficient condition we require an additional request on the drift.
Our second order pde has the same role of the Hamilton-Jacobi equation for first order sufficient conditions.
\end{abstract}

\subjclass{Primary 49L20; Secondary 93B05, 35F21, 35D40.}

\section{Introduction}

In this paper we study a general controlled nonlinear system
\begin{equation}\label{eqsys}
\left\{\begin{array}{ll}\dot x_t=f(x_t,a_t),\\
x_0\in {\mathbb R}^n,
\end{array}\right.
\end{equation}
where $f:\R^n\times A\to\R^{n}$ is continuous and determines uniqueness of trajectories, $A\subset \R^m$ is compact and $a.:[0,+\infty)\to A$ will always be piecewise continuous.
We are also given a function $u:\R^n\to\R$, $u\in C^2(\R^n)$ and a point $\bar x\neq0$, $\nabla u(\bar x)\neq 0$.  The idea is that $u$ describes locally around $\bar x$ the boundary of a target set for (\ref{eqsys}), i.e. locally 
\begin{equation}\label{eqtarget}
{\mathcal T}=\{x:u(x)\leq u(\bar x)\}.
\end{equation}
We want to study the problem of small time local attainability (STLA) of the target ${\mathcal T}$ by system (\ref{eqsys}) at $\bar x$.
In other words, given any small time $t>0$, study when the point $\bar x$ is in the interior of the set of points from which we can reach $\mathcal T$ in time less than $t$ following trajectories of (\ref{eqsys}) or equivalently, the minimum time function is continuous at $\bar x$, see the book by Bardi and Capuzzo-Dolcetta \cite{bcd}. Here, function $u$ may for instance be the signed distance function from the hypersurface $M=\{x:u=u(\bar x)\}$ but this is not required.
The STLA problem is a classical and important subject in control theory. It is well known that when the target is STLA for (\ref{eqsys}) at every point on its boundary, then the minimum time function is continuous in its whole domain, and we can characterise it as the unique solution of a free boundary problem for the Hamilton-Jacobi equation, see Bardi and the author \cite{baso2} and the references therein. 
Moreover the rate of decay of the minimum time function at the target determines the exponent of the local H\"older continuity of such function in its domain, see \cite{bcd}. 
Therefore STLA is a way we approach the regularity of the minimum time function, which is the prototype of solutions of Dirichlet boundary value problems for the Hamilton Jacobi equation. For the connection, see for instance the recent paper by Bardi, Feleqi and the author \cite{bfs}. It is then clear that what we derive here has precise consequences on that problem as well.

Sufficient conditions for STLA at a point of system (\ref{eqsys}) appear in the literature in different ways. The most interesting are pointwise conditions on the controlled vector field $f$ and the function $u$ that locally describes the target. Sufficient conditions have different nature. Classical, explicit first order attainability conditions require that for some $a\in A$ the vector field $f(\cdot,a)$ points inward the target, namely $\nabla u\cdot f(\bar x,a)<0$ (Petrov condition, see e.g. \cite{pe2}). Notice at this point that this is equivalent to asking that $u$ (or a multiple of $u$) satisfies in a neighborhood of $\bar x$
\begin{equation}\label{eqpetrov}
\max_{a\in A}\{-f(x,a)\cdot \nabla u(x)\}\geq1,\end{equation}
namely $u$ is a supersolution of the classical first order Hamilton-Jacobi equation satisfied by the minimum time function.
A consequence is that the minimum time function $T$ satisfies a local estimate of the form
\begin{equation}\label{eqfirst}
T(x)\leq C d(x),\quad x\in B_r(\bar x),
\end{equation}
where $d(x)$ is the distance function from the target, see Bardi-Falcone \cite{BaFa90}.
If Petrov condition fails, namely if $\nabla u\cdot f(\bar x,a)\geq0$ for all $a\in A$, one can give second order conditions. If explicit, second order conditions, at least for symmetric systems, require that a Lie bracket between two available vector fields of the system is transversal to $\partial\mathcal T$ at $\bar x$.
In this case (\ref{eqfirst}) becomes
\begin{equation}\label{eqsecond}
T(x)\leq C d^{1/2}(x),\quad x\in B_r(\bar x),
\end{equation}
showing the difference between first and second order conditions.
For systems that are not symmetric, as we see in the following, the situation is not completely clarified. Let us discuss the transversality of a Lie bracket in more detail. There are at least two unpleasant facts in requiring a transversal Lie bracket as a second order sufficient condition for STLA.
The first one is that constructing a Lie bracket requires two vector fields, while sometimes, as we see in some explicit example below, we can reach the target using a single vector field (constant control) when the geometry of the manifold (in particular its curvature) is favourable as compared to the trajectory, and still have (\ref{eqsecond}) satisfied.
The second and more important for applications is that in order to construct a trajectory that follows (with an error) the new vector field provided by a Lie bracket of two vector fields, one needs to build a trajectory
that has three switches, and they need to happen in short time, if we want to keep the error small. Indeed in order for the system to follow the vector field $[f(\cdot,a_1),f(\cdot,a_2)]$, which denotes the Lie bracket between the two vector fields, for small time $t>0$ we need to use the control
\begin{equation}\label{eqbracket}
a_s=\left\{
\begin{array}{llll}
a_1,\quad& s\in[0,t),   \\a_2,\quad& s\in[t,2t)   \\  
-a_1, &s\in[2t,3t),\\
-a_2,& s\in[3t,4t).
\end{array}
\right.\end{equation}
Time needs to be short because we still get an error of size $t^3$ when $f(\cdot,a) \in C^2$ for $a=a_1,a_2$. 
Using the control in (\ref{eqbracket}) is possible provided the system is symmetric, i.e. for instance $f(\cdot,-a)=-f(\cdot,a)$ for $a=a_1,a_2$, otherwise for generic nonlinear systems the use of Lie brackets appears less direct.

In this paper we aim at simplifying the treatment of second order sufficient conditions for STLA of the system (\ref{eqsys}) at $\bar x$. We want to derive explicit conditions on the vector field that are easy to check and as close as possible to the known necessary conditions.
We will see that if the system is not symmetric, we need to expect that (\ref{eqsecond}) is relaxed to the weaker
\begin{equation}\label{eqsecondb}
T(x)\leq C |x-\bar x|^{1/2},\quad x\in B_r(\bar x).
\end{equation}
The continuity of the minimum time function at $\bar x$ is still satisfied, and this estimate can improve to (\ref{eqsecond}) if either
Petrov or a second order condition is satisfied at every point of the boundary of the target in the neighborhood of $\bar x$ and other appropriate conditions hold. Then by a standard mechanism we still get local 1/2-H\"older regularity of the minimum time function in its domain, see e.g. \cite{so}, or Theorem IV.1.18 in (\cite{bcd}). 

In order to achieve our goal, we start from the conjecture that if a smooth target is STLA at $\bar x$ for the system, then starting at each point in the neighborhood of $\bar x$, we can reach the target using piecewise constant controls with at most one switch.
We will actually prove this conjecture for symmetric systems where our conditions will be necessary and sufficient for STLA, and for nonlinear-affine systems where $f(x,a)=\sigma_0(x)+\sigma(x)a$, under an additional request on the drift $\sigma_0$, while it remains open for general ones. For nonlinear-affine systems, our approach leads us to sufficient conditions that have the same nature (even less restrictive) of those pointwise and explicit appearing in the literature that we are aware of, but we believe that they are presented in a more natural and direct way. For nonlinear affine systems our conditions can be equivalently rewritten in an algebraic fashion making them quite easy to check.
As far as we know, we develop new explicit conditions for general nonlinear systems. 

To be more specific, we introduce the following second order degenerate elliptic inequality in a neighborhood of $\bar x$
\begin{equation}\label{eqdegenerate}
\begin{array}{ll}
\max_{(a_1,a_2)\in A\times A}\{ -{\mbox Tr}(D^2u\; f(x,a_1)\otimes f(x,a_2))\\
-(D(f(x,a_1)+f(x,a_2))(f(x,a_1)+f(x,a_2))
+[f(\cdot,a_1),f(\cdot,a_2)](x))\cdot\nabla u(x)\} \geq1
\end{array}\end{equation}
to be satisfied by $u$ in (\ref{eqtarget}) (or by a multiple of $u$),
and show that if $f(\bar x,A)$ is contained in the tangent space of the target at $\bar x$, then $\mathcal T$ is STLA at $\bar x$ for the system and (\ref{eqsecondb}) holds.
Thus if $u$ is a supersolution of the corresponding equation, then the system satisfies second order STLA at $\bar x$ making (\ref{eqdegenerate}) a counterpart for second order conditions of the Hamilton-Jacobi inequality (\ref{eqpetrov}) when Petrov condition holds.
We remark that (\ref{eqdegenerate}) always contains the case of a target reachable by a single tangent vector field (when the inequality is attained at $a_1=a_2$) and, in the case of symmetric systems, the case of a transversal Lie bracket. Therefore, for $f(x,a)=\sigma(x)a$, (\ref{eqdegenerate}) is more general than the classical one on transversality of a Lie bracket and can be easily reformulated by checking if a suitable symmetric matrix has a negative eigenvalue. Our approach is constructive, since the corresponding eigenvector contains the coordinates of the two controls that we can use to define a trajectory reaching the target with at most one control switch. The case of nonlinear-affine systems can be equivalently reformulated with a rather simple algebraic condition as well.
Inequality (\ref{eqdegenerate}) involves iterated first order hamiltonians, as we describe in the next section, making it a second order operator. The general idea shows how we could proceed and deal with higher order sufficient conditions as well, although we do not do it here.
Part of our results on symmetric systems, has been announced in the proceedings \cite{so1}.

Small time local attainability and regularity of the minimum time function is a long studied and important subject in optimal control.
Besides classical results by Kalman (for linear systems) and by Sussman, who mainly deal with controllability at equilibrium points of the system, we recall Petrov \cite{pe,pe2} for the study of first order controllability, that is attainability at a single point. Liverowskii \cite{li} studied the corresponding problem of second order, see also Bianchini and Stefani \cite{bs,bs2,bs3}. Controllability of higher order to a point was studied by Liverowskii \cite{li2}. For attainability of a target different from a point we recall the papers by Bacciotti \cite{ba} in the case of targets of codimension 1 and the author \cite{so} for manifolds of any dimension, possibly with a boundary. We mention Bardi-Falcone \cite{BaFa90} who showed that Petrov condition is also necessary for local Lipschitz continuity of the minimum time function, while in \cite{bfs} we derive necessary second order conditions.
More recently the work by Krastanov and Quincampoix \cite{kr,kr2} pointed out the importance of the geometry of the target and studied higher order attainability of nonsmooth targets for affine systems with nontrivial drift.  For the same class of systems Marigonda, Rigo and Le \cite{ma,ma2,ma3} studied higher order regularity focusing on the lack of smoothness of the target and the presence of state constraints. We finally mention the paper by Motta and Rampazzo \cite{mr} where the authors study higher order hamiltonians obtained by adding iterated Lie brackets as additional vector fields, in order to prove global asymptotic controllability to a target. Their higher order Hamiltonian is still a first order operator in contrast to ours.

In the following, $a\cdot b$ indicates the scalar product in $\R^n$, and ${}^tB$ the transpose of a matrix $B$. Notation $B_r(x)$ indicates the closed ball of radius $r>0$.
If $f:\R^n\times A\to\R^n$ is a smooth controlled vector field, in components $f=(f_i)_{i=1,\dots,n}$, we will denote $Df(x)=\left(\partial_{x_j}f_i\right)_{i,j=1\dots,n}$ the spatial jacobian matrix of $f$ at $x$. 
We say that the control vector field $f\in C(\R^n\times A;\R^n)$ is of class $C^1$ if all its spatial partial derivatives $\partial_{x_i}f\in C(\R^n\times A;\R^n)$ so they are jointly continuous in all variables. The same notation will be assumed for higher order derivatives as well.
As a general rule, in the product of functions having the same dependance, we will show their argument only after the last factor. Everything we develop will be in an appropriate neighborhood of a given point, but we keep all functions everywhere defined for convenience.

The contents of the paper are as follows. In section 2 we derive some preliminaries and introduce iterated hamiltonians. In section 3 we present out sufficient condition and derive STLA and estimates on the minimum time function. In section 4 we show that our sufficient condition is necessary for symmetric systems and we partly extend such result in the case of nonlinear affine systems in section 5. Finally in section 6 we show some examples illustrating our ideas.

\section{Preliminaries}

In this section we recover the main estimates that we are going to use. We start by deriving some known Taylor expansions on trajectories. We analyse the trajectory resulting from a unique switch between two smooth vector fields.
For vector fields $f,g\in C^1(\R^n;\R^n)$, we will use below the notation of Lie bracket as the vector field
$$[f,g]:\R^n\to\R^n,\quad [f,g](x)=Dg\;f(x)-Df\;g(x).$$

\begin{prop}\label{prop1}
Let $t>0$ and $f,g:\R^n\to\R^n$ be $C^1$ vector fields. 
Consider the Caratheodory solution of
\begin{equation}\label{eqswitch}
\dot x_s=\left\{\begin{array}{ll}f(x_s),\quad&\mbox{ if }s\in[0,t),\\
g(x_s),&\mbox{ if }s\in[t,2t],\end{array}\right.
\quad x_0\in\R^n.
\end{equation}
Then, as $t\to0+$,
$$\begin{array}{l}
x_{2t}-x_0
=(f(x_0)+g(x_0))t+D(f+g)\;(f+g)(x_{0})\frac{t^2}2
+[f,g](x_0)\frac{t^2}2+o(t^2).
\end{array}$$
\end{prop}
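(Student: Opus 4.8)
The plan is to compute the trajectory by Taylor-expanding on each of the two time intervals and then composing the two expansions, tracking all terms up to order $t^2$.

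\medskip

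\emph{Step 1: Expand on the first interval.} On $[0,t)$ the trajectory solves $\dot x_s = f(x_s)$ with $x_0$ given. Writing $x_s = x_0 + s\,\dot x_0 + \frac{s^2}{2}\ddot x_0 + o(s^2)$, I read off $\dot x_0 = f(x_0)$ and, differentiating the equation, $\ddot x_s = Df(x_s)\,\dot x_s = Df(x_s)\,f(x_s)$, so $\ddot x_0 = Df\,f(x_0)$. Evaluating at $s=t$ gives
\begin{equation}\label{eqstep1}
x_t = x_0 + f(x_0)\,t + Df\,f(x_0)\,\frac{t^2}{2} + o(t^2).
\end{equation}

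\emph{Step 2: Expand on the second interval.} On $[t,2t]$ the trajectory solves $\dot x_s = g(x_s)$ starting from $x_t$. By the same one-field expansion, now centered at $s=t$ with running time $\tau = s-t \in [0,t]$,
\begin{equation}\label{eqstep2}
x_{2t} = x_t + g(x_t)\,t + Dg\,g(x_t)\,\frac{t^2}{2} + o(t^2).
\end{equation}
The key subtlety is that $g$ and $Dg$ are evaluated at $x_t$, not at $x_0$. I substitute \eqref{eqstep1} into $g(x_t)$ and expand: $g(x_t) = g(x_0) + Dg(x_0)(x_t - x_0) + o(t) = g(x_0) + Dg\,f(x_0)\,t + o(t)$, keeping only terms that survive to order $t^2$ once multiplied by the leading $t$ in \eqref{eqstep2}. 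For the term $Dg\,g(x_t)\,\frac{t^2}{2}$, since it already carries a factor $t^2$, I may replace $x_t$ by $x_0$ up to $o(t^2)$.

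\medskip

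\emph{Step 3: Collect terms.} Combining \eqref{eqstep1} and \eqref{eqstep2} and gathering by order:
\begin{equation*}
x_{2t} - x_0 = \big(f(x_0)+g(x_0)\big)\,t + \Big(Df\,f + Dg\,f + Dg\,g\Big)(x_0)\,\frac{t^2}{2} + o(t^2).
\end{equation*}
It then remains to recognize the stated form of the second-order coefficient. Expanding the claimed expression, $D(f+g)(f+g) = Df\,f + Df\,g + Dg\,f + Dg\,g$, and adding $[f,g] = Dg\,f - Df\,g$ gives $Df\,f + Dg\,f + Dg\,f + Dg\,g$, so
\begin{equation*}
\tfrac{1}{2}\big(D(f+g)(f+g) + [f,g]\big) = \tfrac{1}{2}\big(Df\,f + 2\,Dg\,f + Dg\,g\big),
\end{equation*}
which matches my collected coefficient. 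This algebraic identification is the only place where the Lie-bracket structure enters, and it is the natural bookkeeping step rather than a genuine obstacle.

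\medskip

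I expect the main care to be required in \textbf{Step 2}, specifically in justifying which cross-terms from expanding $g(x_t)$ about $x_0$ contribute at order $t^2$ and which fall into $o(t^2)$; this is where the asymmetry $2\,Dg\,f$ (as opposed to $Df\,g + Dg\,f$) originates and where a naive expansion could introduce spurious symmetry. The underlying regularity is exactly $f,g \in C^1$, so $Df, Dg$ are continuous and the Peano form of the remainder is available; I would note that the $C^1$ hypothesis is precisely what guarantees the $o(t^2)$ remainders are uniform near $x_0$ so that the composition in Step 2 is legitimate.
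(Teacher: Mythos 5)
Your proposal is correct and follows essentially the same route as the paper: both split the trajectory at the switching time $s=t$ and expand to second order, with the asymmetric cross term $2\,Dg\,f$ arising in both cases from evaluating $g$ at $x_t$ rather than at $x_0$. The only difference is bookkeeping --- the paper carries out the expansion through iterated integral identities (so every $o(t^2)$ remainder is an explicit integral of a continuous function), whereas you invoke Taylor's theorem for the $C^2$ solution on each leg together with a first-order expansion of $g(x_t)$ about $x_0$; your closing remark about uniformity of the remainder on the second leg is precisely the point that the integral formulation disposes of automatically.
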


\begin{proof} Observe that
$$\begin{array}{ll}
x_{2t}-x_0=x_{2t}-x_t+x_t-x_0
=\int_0^t(g(x_{t+s})+f(x_s))\;ds\\
=\int_0^t[(g(x_{t+s})-g(x_t))+(f(x_s)-f(x_0)]\;ds
+g(x_t)t+f(x_0)t\\
=\int_0^t[\int_0^s(Dg\;g(x_{t+r})+Df\;f(x_r))\;dr]\;ds
+\int_0^t(g(x_s)+sDg\;f(x_s))\;ds+f(x_0)t\\
=\int_0^t[\int_0^s(Dg\;g(x_{t+r})+Df\;f(x_r)+Dg\;f(x_r))\;dr]\;ds
+\int_0^tsDg\;f(x_s)\;ds+(f(x_0)+g(x_0))t\\
=(Dg\;g(x_{0})+Df\;f(x_0)+2Dg\;f(x_0))\frac{t^2}2
+(f(x_0)+g(x_0))t+o(t^2).
\end{array}
$$
\end{proof}
The previous result shows the main trajectories that we are going to use in our analysis in order to prove that a system is STLA at a point. A sort of balanced trajectories with a single switch. Observe that if we modify $t$ then the trajectory changes.
\begin{rem} Note that the Taylor estimate on the trajectory of a single vector field shows that if we consider the averaged system
$$\left\{\begin{array}{ll}
\dot y_s=\frac{f(y_s)+g(y_s)}2,\quad&\mbox{ for }s\in[0,2t],\\
y_0=x_0,
\end{array}\right.$$
then it satisfies, as $t\to0+$,
$$\begin{array}{l}y_{2t}-x_0
=(f(x_0)+g(x_0))t+D(f+g)\;(f+g)(x_{0})\frac{t^2}2
+o(t^2).
\end{array}$$
Therefore from Proposition \ref{prop1} we conclude that
$$x_{2t}=y_{2t}+[f,g](x_0)\frac{t^2}2+o(t^2),\quad \hbox{as }t\to0+.$$
It means that one switch between two admissible vector fields causes a deflection from the trajectory of the average of the vector fields by a second order term proportional to their Lie bracket.
This is a special case of the well known Baker-Campbell-Hausdorff formula stopped at the second order.
In the above statements all remainders $o(t^2)$ become terms of the order of $t^3$ if the vector fields $f,g\in C^2$.
\end{rem}
The following straightforward result is useful to localise the problem. Here we go back to a controlled vector field $f\in C(\R^n\times A;\R^n)$, $A$ compact.
\begin{lem}\label{lemesttraj}
Let $\bar x\in\R^n$. Let $0<\delta<r$ and $M=\max_{(x,a)\in B_r(\bar x)\times A}|f(x,a)|$.
Then for all $x_o\in B_\delta(\bar x)$ the trajectory of (\ref{eqsys}) satisfies $x_t\in B_r(\bar x)$ for all $t\in [0,(r-\delta)/M]$.
\end{lem}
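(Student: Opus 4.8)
The plan is to combine a direct integral estimate for the growth of the trajectory with a first-exit-time (continuity) argument; this avoids any Gronwall-type machinery, because the velocity bound $M$ becomes available as soon as the trajectory is known to remain in $B_r(\bar x)$. First I would introduce the first exit time $\tau=\inf\{s>0:x_s\notin B_r(\bar x)\}$, with the convention $\tau=+\infty$ if the trajectory never leaves $B_r(\bar x)$, in which case the conclusion is immediate. For $s\in[0,\tau)$ the point $x_s$ lies in $B_r(\bar x)$, so $|f(x_s,a_s)|\leq M$ by the very definition of $M$; since $a.$ is piecewise continuous, the Caratheodory trajectory of (\ref{eqsys}) is absolutely continuous with $\dot x_s=f(x_s,a_s)$ for a.e.\ $s$. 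Writing $x_t-x_0=\int_0^t f(x_s,a_s)\,ds$ and using $|x_0-\bar x|\leq\delta$, I obtain for every $t\in[0,\tau)$
$$|x_t-\bar x|\leq |x_0-\bar x|+\int_0^t|f(x_s,a_s)|\,ds\leq \delta+Mt.$$

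Next I would argue by contradiction that $\tau\geq(r-\delta)/M$. If instead $\tau<(r-\delta)/M$, then by continuity of $t\mapsto x_t$ and the closedness of $B_r(\bar x)$ one has $|x_\tau-\bar x|=r$; letting $t\to\tau^-$ in the displayed estimate yields $r=|x_\tau-\bar x|\leq\delta+M\tau<\delta+M(r-\delta)/M=r$, a contradiction. Hence $\tau\geq(r-\delta)/M$, which is exactly the assertion that $x_t\in B_r(\bar x)$ for all $t\in[0,(r-\delta)/M]$.

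The only point requiring mild care — and the closest thing to an obstacle — is justifying that the velocity bound $M$ may be applied throughout the interval of integration: this is precisely what the exit-time formulation guarantees, since on $[0,\tau)$ the trajectory has not yet left $B_r(\bar x)$, so the maximum $M$ over $B_r(\bar x)\times A$ legitimately controls $|f(x_s,a_s)|$. Everything else, namely the continuity of the trajectory and the absolutely continuous representation $x_t-x_0=\int_0^t f(x_s,a_s)\,ds$, is standard for Caratheodory solutions driven by piecewise continuous controls.
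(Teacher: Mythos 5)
Your proof is correct and follows essentially the same route as the paper, which consists of exactly the integral estimate $|x_t-\bar x|\leq|x_t-x_o|+\delta\leq Mt+\delta\leq r$. The first-exit-time argument you add merely makes rigorous the point the paper leaves implicit, namely that the bound $M$ on $|f(x_s,a_s)|$ may be applied over the whole interval of integration because the trajectory has not yet left $B_r(\bar x)$.
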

\begin{proof}
Since the trajectory of (\ref{eqsys}) satisfies,
$$x_t=x_o+\int_0^tf(x_s,a_s)\;ds,$$
then for $t\in [0,(r-\delta)/M]$, we get $|x_t-\bar x|\leq|x_t-x_o|+\delta\leq Mt+\delta\leq r$.
\end{proof}

\subsection{Second order Hamiltonians}

For a given vector field $f\in C^1(\R^n;\R^n)$, we define as usual its Hamiltonian as the functional
$$H_f:C^1(\R^n)\to C(\R^n),\quad H_fu=-\nabla u\cdot f.$$
Given two vector fields $f,g\in C^1(\R^n;\R^n)$, we introduce the second order hamiltonian $H_{g,f}:C^2(\R^n)\to C(\R^n)$ by
$$H_{g,f}u=H_g\circ H_fu=\nabla(\nabla u\cdot f)\cdot g=D^2u\;f\cdot g+\nabla u\cdot Df\;g.$$
Notice that the second order Hamiltonian is therefore the following degenerate elliptic operator for $u\in C^2(\R^n)$
$$H_{g,f}u=\mbox{Tr}(D^2u\;f\otimes g)+\nabla u\cdot Df\;g.$$
As a differential operator, the second order hamiltonian is bilinear in its arguments.
We could also define higher order Hamiltonians although we will not consider them in this paper.

By Taylor expansions of functions of multiple variables we then obtain second order estimates of the variation of functions along trajectories of (\ref{eqswitch}) as a consequence of Proposition \ref{prop1}, expressed by second order hamiltonians.
\begin{prop}\label{prop2}
Let $t>0$ and $f,g:\R^n\to\R^n$ be $C^1$ vector fields. Let $u:\R^n\to\R$ be a function of class $C^2$. For the trajectory (\ref{eqswitch}) we have the following estimate, as $t\to0+$,
\begin{equation}\label{eqest}\begin{array}{ll}
u(x_{2t})-u(x_0)&=\nabla u\cdot(f+g)(x_0)t+(H_{{f+g},{f+g}}u(x_0)+\nabla u\cdot[f,g](x_0))\frac{t^2}2+o(t^2)\\
&=\nabla u\cdot(f+g)(x_0)t
+(H_{f,f}u(x_0)+H_{g,g}u(x_0)+2H_{f,g}u(x_0))\frac{t^2}2+o(t^2).
\end{array}\end{equation}
If in particular $f\equiv g$ then
$$u(x_{2t})-u(x_0)=2(\nabla u\cdot f(x_0)t+H_{f,f}u(x_0){t^2})+o(t^2).$$
\end{prop}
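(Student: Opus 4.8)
The plan is to combine the trajectory expansion of Proposition~\ref{prop1} with a second order Taylor expansion of $u$ at $x_0$, and then reorganize the resulting polynomial in $t$ using the definition of $H_{g,f}$ together with its bilinearity. Writing $v:=x_{2t}-x_0$, Taylor's theorem for $u\in C^2$ gives $u(x_{2t})-u(x_0)=\nabla u(x_0)\cdot v+\tfrac12 D^2u(x_0)v\cdot v+o(|v|^2)$. Since Proposition~\ref{prop1} yields $v=(f+g)(x_0)t+O(t^2)$, in particular $|v|=O(t)$, so the Taylor remainder is $o(t^2)$ and I only need to track terms up to order $t^2$ in each piece.

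For the linear piece I substitute the full second order expansion of $v$: this contributes $\nabla u\cdot(f+g)(x_0)\,t$ at first order and $\nabla u(x_0)\cdot\big(D(f+g)(f+g)(x_0)+[f,g](x_0)\big)\tfrac{t^2}2$ at second order, the latter carrying the commutator term. For the quadratic piece $\tfrac12 D^2u(x_0)v\cdot v$ only the leading velocity $(f+g)(x_0)t$ survives to order $t^2$, the cross terms being $O(t^3)=o(t^2)$; this gives $\tfrac12\mathrm{Tr}\big(D^2u\,(f+g)\otimes(f+g)\big)(x_0)\,t^2$. Collecting the two $t^2$ contributions and recognizing, from the very definition $H_{g,f}u=\mathrm{Tr}(D^2u\,f\otimes g)+\nabla u\cdot Df\,g$, that $\mathrm{Tr}(D^2u\,(f+g)\otimes(f+g))+\nabla u\cdot D(f+g)(f+g)=H_{f+g,f+g}u$, I obtain the first line of \eqref{eqest}.

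The second line then follows by pure algebra. Bilinearity of $(f,g)\mapsto H_{g,f}u$ expands $H_{f+g,f+g}u=H_{f,f}u+H_{g,g}u+H_{f,g}u+H_{g,f}u$, while comparing the first order parts and using the convention $[f,g]=Dg\,f-Df\,g$ gives the identity $H_{f,g}u-H_{g,f}u=\nabla u\cdot[f,g]$ (the symmetric Hessian parts cancelling). Hence $H_{f+g,f+g}u+\nabla u\cdot[f,g]=H_{f,f}u+H_{g,g}u+2H_{f,g}u$, which is exactly the second form. Finally, for $f\equiv g$ the commutator vanishes and bilinearity forces $H_{2f,2f}u=4H_{f,f}u$, so the $t^2$ term equals $\tfrac12 H_{2f,2f}u\,t^2=2H_{f,f}u\,t^2$ while the $t$ term is $2\,\nabla u\cdot f(x_0)\,t$, giving the stated special case. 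I expect no real obstacle here: the only point requiring care is the bookkeeping of remainders, namely checking that all discarded terms are genuinely $o(t^2)$ — which holds because $|v|=O(t)$ controls both the Taylor remainder and the quadratic cross terms.
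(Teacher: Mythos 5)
Your proposal is correct and follows essentially the same route as the paper: a second order Taylor expansion of $u$ at $x_0$ combined with the trajectory expansion of Proposition~\ref{prop1}, followed by the identity $\nabla u\cdot[f,g]=H_{f,g}u-H_{g,f}u$ and bilinearity of the second order hamiltonian to pass to the second form. Your extra care with the remainder bookkeeping (that $|v|=O(t)$ makes both the Taylor remainder and the cross terms $o(t^2)$) is implicit in the paper's one-line computation but adds nothing different in substance.
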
 
\begin{proof}
From the standard Taylor estimate and Proposition \ref{prop1}
$$\begin{array}{ll}
u(x_{2t})-u(x_0)&=\nabla u\cdot(f+g)(x_0)t
+\nabla u\cdot(D(f+g)\;(f+g)(x_0)+[f,g](x_0))\frac{t^2}2\\&
+\frac12D^2u(f+g)\cdot(f+g)t^2+o(t^2)
\end{array}$$
from which the first equality in (\ref{eqest}) follows. The second equality is a consequence of the simple observation that
$$\nabla u\cdot[f,g]=H_{f,g}u-H_{g,f}u,$$
when $u\in C^2$, and by bilinearity of the second order hamiltonian.
\end{proof}
\begin{rem}
If the vector fields $f,g$ are at least of class $C^2$ and the function $u$ is at least of class $C^3$, then the remainders in Proposition \ref{prop2} are of the order $t^3$. Notice that in (\ref{eqest}) we obtain the variation of $u(x_\cdot)$ at the point $2t$ after a complete switch, and modifying $t$ will change the trajectory we are following.
\end{rem}

\section{Sufficient conditions for second order controllability}

This section contains a key general result of the paper, that we will later investigate and specialise to more restrictive classes of systems.
We want to discuss when a given target, locally described in the neighborhood of a point $\bar x$ as $\mathcal T=\{x:u(x)\leq u(\bar x)\}$, is STLA at $\bar x$ by system (\ref{eqsys}). We plan to reach the target only using trajectories with a single (at most) switch between two available vector fields of the system, in the form of (\ref{eqswitch}).
We recall the definition of minimum time function for system (\ref{eqsys}) and target $\mathcal T$, namely
$$T(x)=\inf_{a_\cdot\in L^\infty(0,+\infty)}t_x(a_\cdot),$$
where $t_x(a_\cdot)=\inf\{t\geq0:x_t\in\mathcal T,\;x_\cdot \mbox{ trajectory of }(\ref{eqsys})\}$. 
In particular $T\equiv0$ on $\mathcal T$.
The following result is a consequence of Proposition \ref{prop2}.
\begin{thm}\label{thm1}
Let $f:\R^n\times A\to\R^{n}$ be of class $C^2$ and let $u:\R^n\to\R$ be a function of class $C^3$. Let $\bar x\in\R^n$ be a point such that $\nabla u(\bar x)\neq0$, $\nabla u\cdot f(\bar x,a)=0$, for all $a\in A$ and suppose that
\begin{equation}\label{eqpde}
\begin{array}{ll}
\max_{(a_1,a_2)\in A\times A}\{ -{\mbox Tr}(D^2u(\bar x)\; f(\bar x,a_1)\otimes f(\bar x,a_2))\\
-(D(f(\bar x,a_1)+f(\bar x,a_2))(f(\bar x,a_1)+f(\bar x,a_2))+[f(\cdot,a_1),f(\cdot,a_2)](\bar x))\cdot\nabla u(\bar x)\}>0.
\end{array}\end{equation}
Then the target $\{x:u(x)\leq u(\bar x)\}$ is STLA for the system (\ref{eqsys}) at $\bar x$ and the minimum time function $T$ satisfies in the neighborhood of $\bar x$
\begin{equation}\label{eqestimate}
T(x)\leq C|x-\bar x|^{1/2}.\end{equation}
\end{thm}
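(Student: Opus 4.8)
The plan is to use the pointwise inequality \eqref{eqpde} to construct, from each nearby starting point $x_0$, a single-switch trajectory of the form \eqref{eqswitch} that reaches the target in a controlled time, and then to translate the resulting reachability into both the STLA property and the estimate \eqref{eqestimate}. The main tool is the second-order expansion in Proposition \ref{prop2}, which expresses the variation $u(x_{2t})-u(x_0)$ along a switched trajectory precisely in terms of the quantity being maximised in \eqref{eqpde}. The key observation is that the operator inside the braces in \eqref{eqpde}, evaluated at a maximising pair $(a_1,a_2)$, is exactly $-(H_{f+g,f+g}u+\nabla u\cdot[f,g])$ with $f=f(\cdot,a_1)$ and $g=f(\cdot,a_2)$, i.e.\ the coefficient of $t^2/2$ in the expansion of $u(x_{2t})-u(x_0)$.

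First I would fix a maximising pair $(\bar a_1,\bar a_2)$ attaining \eqref{eqpde}, so that for $f=f(\cdot,\bar a_1)$, $g=f(\cdot,\bar a_2)$ we have $H_{f+g,f+g}u(\bar x)+\nabla u\cdot[f,g](\bar x)=-2\eta$ for some $\eta>0$. At the base point the first-order term vanishes by hypothesis, $\nabla u\cdot f(\bar x,a)=0$ for all $a$, so Proposition \ref{prop2} gives $u(x_{2t})-u(\bar x)=-\eta t^2+o(t^2)$; thus starting exactly at $\bar x$ the switched trajectory enters the sublevel set for all small $t$. The substantive step is to obtain a uniform version for all nearby initial points. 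By continuity of all data (here the $C^2$ regularity of $f$ and $C^3$ regularity of $u$ make the remainders in Proposition \ref{prop2} genuinely $O(t^3)$ with constants uniform on a compact neighbourhood) I would produce, for $x_0\in B_\rho(\bar x)$, an estimate of the schematic form
\begin{equation}\label{eqplan1}
u(x_{2t})-u(x_0)\leq \nabla u(x_0)\cdot(f+g)(x_0)\,t-\eta t^2+Ct^3,
\end{equation}
valid for $t$ small, with $\eta,C$ independent of $x_0$. Lemma \ref{lemesttraj} guarantees the whole trajectory stays in the neighbourhood where these bounds hold, so the expansion is legitimate.

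The heart of the argument is then to choose, for each $x_0$, a switching time $t=t(x_0)$ making the right-hand side of \eqref{eqplan1} drop below $u(\bar x)$. Writing $\lambda(x_0)=\nabla u(x_0)\cdot(f+g)(x_0)$ (which is $O(|x_0-\bar x|)$ since it vanishes at $\bar x$), the quadratic $\lambda t-\eta t^2$ has to absorb the initial gap $u(x_0)-u(\bar x)$. I expect the main obstacle to be handling the sign and size of the linear term $\lambda(x_0)$ uniformly: when $\lambda(x_0)$ is unfavourable one must still reach the target, and the correct scaling turns out to be $t\sim |x_0-\bar x|^{1/2}$, which is exactly what produces the exponent $1/2$ in \eqref{eqestimate} rather than the $1$ of the Petrov case. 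Concretely I would split into the regime where the initial gap dominates (choose $t$ of order $|x_0-\bar x|^{1/2}$ so that $-\eta t^2$ beats both the gap and the linear term) and the regime where $x_0$ is already essentially on the boundary; a careful bookkeeping of the competing powers of $t$ and $|x_0-\bar x|$ gives $t(x_0)\leq C|x_0-\bar x|^{1/2}$ with the target reached at time $2t(x_0)$.

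Finally, once such a switching time is exhibited for every $x_0$ in a full neighbourhood of $\bar x$, the point $\bar x$ lies in the interior of the set of points reaching $\mathcal T$ in arbitrarily small time, which is the definition of STLA at $\bar x$; and the bound $T(x_0)\leq 2t(x_0)\leq C|x-\bar x|^{1/2}$ is precisely \eqref{eqestimate}. I would close by noting that the construction uses a piecewise constant control with at most one switch between $\bar a_1$ and $\bar a_2$, consistent with the conjecture stated in the introduction.
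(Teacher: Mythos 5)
Your proposal is correct and follows essentially the same route as the paper's own proof: fix a maximising pair, use the uniform second-order expansion of Proposition \ref{prop2} on a ball where the strict inequality persists by continuity (with Lemma \ref{lemesttraj} confining the trajectory), bound the linear term by a Lipschitz estimate on $\nabla u\cdot(f+g)$ vanishing at $\bar x$ and the initial gap by $Ld(x_0)\leq L|x_0-\bar x|$, and choose the switching time of order $|x_0-\bar x|^{1/2}$. The paper does this in a single computation rather than a case split, but the content is identical.
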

\begin{rem}
If (\ref{eqpde}) holds, then locally around $\bar x$ the inequality is preserved by continuity. Therefore (\ref{eqpde}) holds if and only if $u$ is a strict supersolution of the corresponding elliptic partial differential equation in a neighborhood of $\bar x$
\begin{equation}
\begin{array}{ll}
\max_{(a_1,a_2)\in A\times A}\{ -{\mbox Tr}(D^2u(x)\; f(x,a_1)\otimes f(x,a_2))
-(D(f(x,a_1)+f(x,a_2))(f(x,a_1)+f(x,a_2))\\
\quad+[f(\cdot,a_1),f(\cdot,a_2)](x))\cdot\nabla u(x)\}
\geq \rho>0,\quad x\in B_r(\bar x),
\end{array}\end{equation}
for some $r,\rho>0$.
\end{rem}
\begin{proof}
Assume (\ref{eqpde}) and let $a_1,a_2\in A$ be such that
\begin{equation}\label{eqpdepoint}
\begin{array}{ll}-{\mbox Tr}(D^2u\; f(\bar x,a_1)\otimes f(\bar x,a_2))
-(D(f(\bar x,a_1)+f(\bar x,a_2))(f(\bar x,a_1)+f(\bar x,a_2))\\
\quad +[f(\cdot,a_1),f(\cdot,a_2)](\bar x))\cdot\nabla u(\bar x)>0.\end{array}
\end{equation}
By continuity there are $\rho,r>0$ such that
\begin{equation}\label{eq2ndest}\begin{array}{ll}-{\mbox Tr}(D^2u\; f(x,a_1)\otimes f(x,a_2))
-(D(f(x,a_1)+f(x,a_2))(f(x,a_1)+f(x,a_2))\\
\quad +[f(\cdot,a_1),f(\cdot,a_2)](x))\cdot\nabla u(x)>\rho,\quad x\in B_r(\bar x).\end{array}
\end{equation}
Let $M=\max\{|f(x,a)|:(x,a)\in B_r(\bar x)\times A\}$.
Let $f(\cdot)=f(\cdot,a_1),\;g(\cdot)=f(\cdot,a_2)$, 
we follow the trajectory (\ref{eqswitch}) of the two vector fields such that $\nabla u\cdot f(\bar x)=0=\nabla u\cdot g(\bar x)$, starting out at $x_0=\bar x$. Therefore by (\ref{eqest}), and (\ref{eq2ndest}),
$$u(x_{2t})-u(\bar x)\leq-\rho\frac{t^2}2+Ct^3,$$
for $t$ positive and sufficiently small so that the trajectory remains in $B_r(\bar x)$, and for some positive constant $C$ estimating the remainder term of (\ref{eqest}) in $B_r(\bar x)$. We want to keep $t\leq \rho/{4C}$ so that the right hand side remains negative.

We now use continuous dependence on the initial condition and start the trajectory $x^1_\cdot$ in (\ref{eqswitch}) from a point $x_0=x^1$, $|x^1-\bar x|\leq\delta<r$, $\delta$ to be decided later. We fix any 
\begin{equation}\label{eqtimeest}
0< t\leq \min\{\rho/{4C},(r-\delta)/M,1\}\end{equation}
and obtain, by Lemma \ref{lemesttraj} and (\ref{eqest}),
\begin{equation}\label{eq2nd}
\begin{array}{ll}
u(x^1_{2t})-u(\bar x)&
=u(x^1_{2t})-u(x^1)+u(x^1)-u(x_p)
\leq L_1|x^1-\bar x| t-\rho \frac{t^2}2+Ct^3+Ld(x^1)\\
&\leq L_1|x^1-\bar x| t-\frac{\rho}4t^2+Ld(x^1)
\leq  \tilde L|x^1-\bar x|-\frac{\rho}4t^2,
\end{array}\end{equation}
where $L$ is a local Lipschitz constant for $u$, $L_1$ is a local Lipschitz constant for the product $\nabla u(x)\cdot (f(x)+g(x))$ which vanishes at $\bar x$ by the assumption, $\tilde L=L+L_1$, and $x_p\in\partial {\mathcal T}$ is such that $d(x^1)=|x^1-x_p|$. The point $x_p\in B_r(\bar x)$ for $\delta\leq r/2$.
Note that the right hand side of (\ref{eq2nd}) is zero at
$$\bar t=2\sqrt{\frac{\tilde L}{\rho}}\;|x^1-\bar x|^{1/2}\leq 2\sqrt{\frac{\tilde L\delta}{\rho}},$$
and that the right hand side of the previous formula is smaller than the right hand side of (\ref{eqtimeest}) for $\delta>0$ sufficiently small.
Therefore the trajectory (\ref{eqswitch}) starting at any $x^1\in B_\delta(\bar x)$ will reach the target $\{x:u(x)\leq u(\bar x)\}$ earlier than $\bar t$. In particular we can estimate the minimum time to reach the target as
$$T(x^1)\leq 2\sqrt{\frac{\tilde L}{\rho}}\;|x^1-\bar x|^{1/2},\quad x^1\in B_\delta(\bar x),$$
namely with the square root of the distance from the centre of the ball on the target.
Hence $T$ is continuous at $\bar x$ and the system is STLA at $\bar x$.
\end{proof}

\begin{rem}\label{remcontrollability}
Notice that the same conclusion of Theorem \ref{thm1} will hold assuming directly that there exist $a_1,a_2\in A$ such that $\nabla u(\bar x)\cdot (f(\bar x,a_1)+f(\bar x,a_2))=0$ and (\ref{eqpdepoint}) holds, a little weaker than the statement. However, if one of the two vector fields points inward $\mathcal T$, we could reach a stronger conclusion with a similar argument. Indeed, if we know that for $r,\rho>0$ and $a\in A$
\begin{equation}\label{eq1storder}
-\nabla u\cdot f(x,a)\geq \rho>0,\quad x\in B_r(\bar x),
\end{equation}
and  $(x_t)_{t\geq 0}$ is the trajectory of the vector field $f(\cdot,a)$ with initial point $x_o=\bar x$, we easily obtain an estimate of the form
$$u(x_t)-u(\bar x)\leq -\rho t+Ct^2,$$
for $t$ sufficiently small. Here the leading negative term has a first order power in $t$. If now $|x^1-\bar x|\leq\delta$ and we follow the trajectory of $f(\cdot,a)$ starting at $x^1$, call it $(x^1_t)_{t\geq0}$, then the estimate (\ref{eq2nd}) becomes
\begin{equation}\label{eq1st}
u(x^1_t)-u(\bar x)=u(x^1_t)-u(x^1)+u(x^1)-u(x_p)\leq -\rho t+Ct^2+Ld(x^1),\end{equation}
where $L$ is a local Lipschitz constant for $u$ and $x_p\in\partial\mathcal T$ is such that $d(x^1)=|x^1-x_p|$. It follows from here that for $\delta$ sufficiently small, the minimum time to reach the target from $x_1$ can be estimated as
\begin{equation}\label{eqnew}
T(x^1)\leq\frac{2L}\rho d(x^1),\quad x^1\in B_\delta(\bar x),\end{equation}
therefore with the distance of $x^1$ from the target. Thus the target is STLA for the system at $\bar x$  but the minimum time function satisfies a stronger estimate. Moreover notice that, in contrast with the previous calculation, from (\ref{eqestimate}) it is yet unclear how to pass directly to (\ref{eqsecond}) due to the presence of a nonvanishing first order term in (\ref{eqest}) at $x^1$.
\end{rem}

In order to improve estimate (\ref{eqestimate}) to (\ref{eqsecond}), the question turns out to be quite clear for symmetric systems, see next Corollary \ref{corsecondest} and Remark \ref{remextendedsym}. Less definitive results can be provided for general systems. The question has a delicate point: if the system is not symmetric, a second order sufficient attainability condition valid at a point is not preserved at all points in the neighborhood and this may even lead to discontinuities of the minimum time function, as we see in Example \ref{ex3} below. The reason is that if the vector fields are tangent at a point, they need not be tangent in the neighborhood.
To overcome this difficulty, we need to add assumptions and we can either improve the proof of Theorem \ref{thm1} or require a controllability condition at each point in the neighborhood of $\bar x$.
\begin{cor}\label{corsecondest}
Let $f:\R^n\times A\to\R^{n}$ be of class $C^2$ and let $u:\R^n\to\R$ be a function of class $C^3$. Let $\bar x\in\R^n$ and $r,\rho>0$, $a_1,a_2\in A$ be such that $\nabla u\cdot (f(\bar x,a_1)+f(\bar x,a_2))=0$, and
\begin{equation}\label{eqcor}
\begin{array}{ll}
-{\mbox Tr}(D^2u(x)\; f(x,a_1)\otimes f(x,a_2))
-(D(f(x,a_1)+f(x,a_2))(f(x,a_1)+f(x,a_2))\\
+[f(\cdot,a_1),f(\cdot,a_2)](x))\cdot\nabla u(x)\geq\rho,\quad x\in B_r(\bar x).
\end{array}\end{equation}
Suppose in addition that there is $\delta>0$ such that if $x^1\in B_\delta(\bar x)$, and $x_p\in\partial\mathcal T$ is such that $d(x^1)=|x^1-x_p|$, then
\begin{equation}\label{eqdest1}
\nabla u(x_p)\cdot (f(x_p,a_1)+ f(x_p,a_2))\leq0.
\end{equation}
Therefore the minimum time function $T$ satisfies
\begin{equation}\label{eqfinalest}
T(x^1)\leq 2\sqrt{\frac{\tilde L}\rho}d(x^1)^{1/2}.\end{equation}
\end{cor}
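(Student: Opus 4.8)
The plan is to repeat the proof of Theorem \ref{thm1} almost verbatim, the single new ingredient being the projection hypothesis (\ref{eqdest1}); it is precisely this that lets us replace the length scale $|x^1-\bar x|$ of Theorem \ref{thm1} by the smaller $d(x^1)$. First I would fix $x^1\in B_\delta(\bar x)$ and choose $x_p\in\partial\mathcal T$ with $d(x^1)=|x^1-x_p|$; since $\partial\mathcal T=\{u=u(\bar x)\}$ this gives $u(x_p)=u(\bar x)$, and, as already noted in Theorem \ref{thm1}, $x_p\in B_r(\bar x)$ once $\delta\le r/2$. I would then run the one-switch trajectory (\ref{eqswitch}) with $f=f(\cdot,a_1)$ and $g=f(\cdot,a_2)$ starting from $x^1$ and expand $u(x^1_{2t})-u(x^1)$ by (\ref{eqest}).

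The heart of the matter is the first order term $\nabla u(x^1)\cdot(f(x^1,a_1)+f(x^1,a_2))\,t$. Writing $F(x)=f(x,a_1)+f(x,a_2)$, the scalar map $x\mapsto\nabla u(x)\cdot F(x)$ is Lipschitz on $B_r(\bar x)$ with some constant $L_1$, so (\ref{eqdest1}) at the foot point $x_p$ yields
$$\nabla u(x^1)\cdot F(x^1)\le\nabla u(x_p)\cdot F(x_p)+L_1|x^1-x_p|\le L_1\,d(x^1).$$
This is exactly the step where Theorem \ref{thm1} instead used tangency at $\bar x$ and obtained $L_1|x^1-\bar x|$. For the quadratic term I would invoke (\ref{eqcor}) together with Lemma \ref{lemesttraj}: provided $t$ is small enough that the whole trajectory stays in $B_r(\bar x)$, the bracket multiplying $\tfrac{t^2}2$ in (\ref{eqest}) is $\le-\rho$, and absorbing the $O(t^3)$ remainder for $t\le\rho/(4C)$ leaves a contribution $\le-\tfrac\rho4t^2$. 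Adding $u(x^1)-u(x_p)\le L\,d(x^1)$ and setting $\tilde L=L+L_1$, for $t\le1$ this produces
$$u(x^1_{2t})-u(\bar x)=\big(u(x^1_{2t})-u(x^1)\big)+\big(u(x^1)-u(x_p)\big)\le\tilde L\,d(x^1)-\tfrac\rho4t^2.$$

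It then remains to argue exactly as in Theorem \ref{thm1}. The right-hand side above vanishes at $\bar t=2\sqrt{\tilde L/\rho}\,d(x^1)^{1/2}$, and since $\bar x\in\mathcal T$ we have $d(x^1)\le|x^1-\bar x|\le\delta$, so for $\delta$ small $\bar t$ falls inside the admissible range (\ref{eqtimeest}) that keeps the trajectory in $B_r(\bar x)$. Hence the trajectory reaches the target within time $\bar t$, which is (\ref{eqfinalest}). I expect the only real work to be the bookkeeping that guarantees $x^1$, its projection $x_p$, and the trajectory up to time $\bar t$ all lie in $B_r(\bar x)$ simultaneously, so that (\ref{eqcor}), the Lipschitz bound on $\nabla u\cdot F$, and (\ref{eqdest1}) are available at once; the one genuinely new idea is simply that evaluating the tangency defect of $F$ at the nearest boundary point $x_p$, rather than at $\bar x$, upgrades the controlling scale from $|x^1-\bar x|$ to $d(x^1)$ and hence (\ref{eqestimate}) to (\ref{eqfinalest}).
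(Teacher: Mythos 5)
Your proposal is correct and follows essentially the same route as the paper: the paper's proof likewise adds and subtracts $\nabla u(x_p)\cdot(f(x_p,a_1)+f(x_p,a_2))$, uses (\ref{eqdest1}) together with the Lipschitz constant $L_1$ of $x\mapsto\nabla u(x)\cdot(f(x,a_1)+f(x,a_2))$ to bound the first order term by $L_1 d(x^1)$, and then concludes exactly as in Theorem \ref{thm1} with $\bar t=2\sqrt{\tilde L/\rho}\,d(x^1)^{1/2}$.
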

\begin{proof}
Let $x^1\in B_\delta(\bar x)$, we just refine the proof of Theorem \ref{thm1} with the notations there used and the new assumption (\ref{eqdest1}). It implies that we can improve estimate (\ref{eq2nd}) since
$$\begin{array}{ll}
\nabla u(x^1)\cdot (f(x^1,a_1)+f(x^1,a_2))\\
=\nabla u(x^1)\cdot (f(x^1,a_1)+f(x^1,a_2))\pm\nabla u(x_p)\cdot (f(x_p,a_1)+f(x_p,a_2))
\leq  L_1d(x^1)
\end{array}$$
and then (\ref{eq2nd}) becomes
$$u(x^1_{2t})-u(\bar x)\leq L_1d(x^1)t-\frac{\rho}4t^2+Ld(x^1)\leq
\tilde Ld(x^1)-\frac{\rho}4t^2.$$
Now we can conclude exactly as before and at $x^1$ we obtain (\ref{eqfinalest}) instead.
\end{proof}
\begin{rem}\label{remextendedsym}
Note that, in order to check (\ref{eqdest1}), if there exist $\tilde a_1,\tilde a_2$ such that $f(x,\tilde a_1)\equiv -f(x,a_1)$ and $f(x,\tilde a_2)\equiv -f(x,a_2)$ for all $x\in B_r(\bar x)$, then exchanging the pair $(a_1,a_2)$ with $(\tilde a_1,\tilde a_2)$ we have that (\ref{eqcor}) is still satisfied, while we can change the sign of the left hand side of (\ref{eqdest1}). Therefore in this case we can have (\ref{eqdest1}) automatically satisfied. This occurs for instance for symmetric systems where $f(x,a)=\sigma(x)a$ and $A$ is symmetric to the origin. In that case (\ref{eqfinalest}) holds for all $x^1\in B_\delta(\bar x)$.

As an alternative to (\ref{eqdest1}), we could ask that
$$\nabla u(x^1)\cdot (f(x^1,a_1)+ f(x^1,a_2))\leq0,\quad x^1\in B_\delta(\bar x)
$$
and reach the same conclusion (\ref{eqfinalest}) as well, as immediately seen.
\end{rem}

The next result applies if the target ${\mathcal T}$ satisfies the sufficient conditions of Theorem \ref{thm1} at $\bar x$ and (\ref{eq1storder}) in Remark \ref{remcontrollability} locally on $\partial\mathcal T$ at the other points in the neighborhood of $\bar x$.
\begin{prop}\label{propdregular}
Suppose that for $\bar x\in\partial\mathcal T$ there are $r,C>0$, $r\leq1$ such that
$$T(x)\leq C|x-\bar x|^{1/2},\quad x\in B_r(\bar x).$$
Let $K,u\geq1$ and define
$$D:=(B_r(\bar x)\backslash\{\bar x\})\cap\{x:Kd(x)\leq |x-\bar x|^u\}.$$
Assume that for all $y\in B:=\partial\mathcal T\cap (B_r(\bar x)\backslash\{\bar x\})$ there are $r_y,\rho_y>0$ and $s\in(0,1]$ such that
\begin{equation}\label{eqestdtarget}
T(x)\leq \frac{2L}{\rho_y}d(x),\quad \hbox{for all }x\in B_{r_y}(y);\quad r_y^s\leq R\rho_y,\quad\hbox{and }D\subset\cup_{y\in B}B_{r_y}(y).
\end{equation}
Then for some $M\geq 0$ and $\bar s=\min\{1/(2u),1-s\}$
\begin{equation}\label{eqestest}
T(x)\leq M d^{\bar s}(x),\quad x\in B_r(\bar x).
\end{equation}
\end{prop}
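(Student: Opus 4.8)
The plan is to split the punctured ball $B_r(\bar x)\setminus\{\bar x\}$ into the set $D$, where $x$ lies relatively close to the target, and its complement, estimating $T$ separately on each piece and finally keeping the worse of the two resulting exponents. Wherever $d(x)=0$ (in particular at $x=\bar x$) the conclusion is trivial, since then $x\in\mathcal T$ and $T(x)=0$, so only the two regimes with $d(x)>0$ matter. I would also record at the outset that, because $r\le1$ and $\bar x\in\partial\mathcal T$, every $x\in B_r(\bar x)$ satisfies $d(x)\le|x-\bar x|\le1$; this lets me compare powers of $d(x)$ freely, replacing a larger exponent by a smaller one at the cost of nothing.

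On the complement of $D$, that is for $x\in B_r(\bar x)\setminus\{\bar x\}$ with $Kd(x)>|x-\bar x|^u$, I would feed the inequality $|x-\bar x|<(Kd(x))^{1/u}$ into the standing hypothesis $T(x)\le C|x-\bar x|^{1/2}$ to get
$$T(x)\le C|x-\bar x|^{1/2}\le CK^{1/(2u)}\,d(x)^{1/(2u)}.$$
Since $1/(2u)\ge\bar s$ and $d(x)\le1$, this already gives $T(x)\le CK^{1/(2u)}\,d(x)^{\bar s}$ on this region.

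On $D$ the argument carries the real content. By the covering hypothesis $D\subset\bigcup_{y\in B}B_{r_y}(y)$, any $x\in D$ lies in some ball $B_{r_y}(y)$ with $y\in\partial\mathcal T$, where the local linear estimate $T(x)\le\frac{2L}{\rho_y}d(x)$ applies. The obstacle is that this bound is not uniform: as $y$ ranges over $B$ the constant $\rho_y$ may degenerate, so one cannot merely take a supremum over the covering. The balance condition $r_y^s\le R\rho_y$ is exactly what repairs this. It gives $\tfrac1{\rho_y}\le R\,r_y^{-s}$, and since $y\in\partial\mathcal T\subset\mathcal T$ forces $d(x)\le|x-y|\le r_y$, I can absorb the bad factor: $d(x)\,r_y^{-s}\le d(x)\,d(x)^{-s}=d(x)^{1-s}$. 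Hence $T(x)\le 2LR\,d(x)^{1-s}$ uniformly in $y$, and using $1-s\ge\bar s$ with $d(x)\le1$ yields $T(x)\le 2LR\,d(x)^{\bar s}$ on $D$.

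Combining the two regimes with $M=\max\{CK^{1/(2u)},\,2LR\}$ delivers $T(x)\le M\,d(x)^{\bar s}$ throughout $B_r(\bar x)$. The only genuinely delicate step is the one on $D$: one must observe both that $d(x)\le r_y$ (because the covering balls are centred on $\partial\mathcal T$) and that the scaling balance $r_y^s\le R\rho_y$ converts the non-uniform factor $1/\rho_y$ into the clean uniform power $d(x)^{1-s}$. Everything else is exponent bookkeeping, legitimised by the normalisation $d(x)\le1$.
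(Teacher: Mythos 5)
Your proof is correct and follows essentially the same route as the paper's: split $B_r(\bar x)$ into $D$ and its complement, use $|x-\bar x|^u< Kd(x)$ off $D$ to get the exponent $1/(2u)$, and on $D$ use $d(x)\le r_y$ together with $r_y^s\le R\rho_y$ to get the uniform exponent $1-s$, then pass to $\bar s$ via $d(x)\le 1$. The only difference is that you spell out the exponent bookkeeping and the degenerate case $d(x)=0$ a bit more explicitly than the paper does.
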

\begin{proof}
Let $x\in B_r(\bar x)\backslash\mathcal T$.
If $x\notin D$ then
$$T(x)\leq C|x-\bar x|^{1/2}\leq CK^{1/(2u)}d^{1/(2u)}(x),$$
since $|x-\bar x|^u\leq K d(x)$.
If otherwise $x\in D$, let $y\in B$ such that $x\in B_{r_y}(y)$. Then by (\ref{eqestdtarget}),
$$T(x)\leq \frac{2L}{\rho_y}d(x)\leq \frac{2L}{\rho_y}r_y^sd^{1-s}(x)\leq 2LRd^{1-s}(x)$$
and we conclude.
\end{proof}
\begin{rem}
The condition (\ref{eqestdtarget}) on the set $B$ of Proposition \ref{propdregular} is about the rate of decay of the scalar product $\nabla u(x)\cdot f(x,a)$ as $x$ approaches $\bar x$. Indeed if at $\bar x$ the system satisfies a second order and not a first order controllability condition, then $r_y<|y-\bar x|$ and actually we can expect that to cover $D$ it is enough to suppose $\tilde Kr_y\leq |y-\bar x|^u$ for some $\tilde K>0$. Therefore the sufficient condition in the statement is satisfied if for all $y\in B$ there is some $a\in A$ such that $|y-\bar x|^{su}/R\leq -\nabla u(y)\cdot f(y,a)$, see Remark \ref{remcontrollability}.
If for instance $u=1,s=1/2$, then we reach (\ref{eqsecond}) with the exponent $1/2$. If instead $su=1$, $s=2/3$, then we reach (\ref{eqsecond}) with the exponent $1/3$. We racall that estimates of the form (\ref{eqestest}) are crucial to derive local H\"older continuity of the minimum time function and for solutions to boundary value problems for the Hamilton-Jacobi equation, see \cite{bcd,bfs}.
\end{rem}

\begin{rem}
One may notice that the partial differential equation corresponding to (\ref{eqpde}) recalls a stochastic control system. Suppose that indeed for some $a_1,a_2\in A$ we have
$$\begin{array}{ll}-{\mbox Tr}(D^2u\; f( x,a_1)\otimes f( x,a_2))
-(D(f( x,a_1)+f( x,a_2))(f( x,a_1)+f( x,a_2))\\
\quad+[f(\cdot,a_1),f(\cdot,a_2)]( x))\cdot\nabla u( x)\geq \rho>0,\quad x\in\R^n.
\end{array}$$
Consider then the stochastic system
$$\left\{\begin{array}{ll}
dx_t=(D(f(x_t,a_1)+f(x_t,a_2))(f(x_t,a_1)+f(x_t,a_2))+[f(\cdot,a_1),f(\cdot,a_2)](x_t))dt\\
\quad+\sqrt{2}(f(x_t,a_1)+f(x_t,a_2))\;dW_t,\\
x_0=x_o,
\end{array}\right.$$
where $W_t$ is a one dimensional, progressively measurable, brownian motion.
If $u\in C^2(\R^n)$, by Ito's formula we obtain the variation of $u$ on the trajectory of the system
$$\begin{array}{ll}
du_t=({\mbox Tr}(D^2u\; f( x_t,a_1)\otimes f( x_t,a_2))
+(D(f( x_t,a_1)+f( x_t,a_2))(f( x_t,a_1)+f( x_t,a_2))\\
+[f(\cdot,a_1),f(\cdot,a_2)]( x_t))\cdot\nabla u(x_t))\;dt+\sqrt{2}(f(x_t,a_1)+f(x_t,a_2))\cdot \nabla u(x_t)\;dW_t.
\end{array}$$
Thus by integrating on $(0,t)$, taking the expectation and using the fact that Ito's integral is a martingale we conclude that
$$\begin{array}{ll}
Eu(x_t)&=u(x_o)+E\int_0^t({\mbox Tr}(D^2u\; f( x_t,a_1)\otimes f( x_t,a_2))\\
&+(D(f( x_t,a_1)+f( x_t,a_2))(f( x_t,a_1)+f( x_t,a_2))
+[f(\cdot,a_1),f(\cdot,a_2)](x_tx))\cdot\nabla u(x_t))\;dt\\
&\leq u(x_o)-\rho t.
\end{array}$$
Therefore the mean value of $u$ decreases on the trajectories of the stochastic system.
\end{rem}
 
For completeness, we conclude this section recalling a necessary condition which is proved in \cite{bfs} for second order conditions. Recall first that when the target is the closure on an open set, it was proved in \cite{BaFa90} that the Lipschitz continuity of the minimum time function is characterised by the existence of  vector fields pointing inward the target. More precisely, if $\partial T$ is $C^2$ near $\bar x$, then 
\begin{equation}\label{T-Hol-as}
T(x) \leq C d(x)^s ,
\end{equation}
 near $\bar x$, with $s\in(1/2,1]$, if and only if there is $\bar a\in A$ such that $f(\bar x,\bar a) \cdot n(\bar x) < 0$, $n(\bar x)$ is the outward normal to the target. 
 
\begin{prop}\label{nec-cond-2}
Suppose that the target is defined as $\mathcal T=\{x:u(x)\leq u(\bar x)\}$ in the neighborhood of a point $\bar x\in \R^n$, where $u\in C^3(\R^n)$, $\nabla u(\bar x)\neq0$, $\nabla u(\bar x)\cdot f(\bar x,a)=0$ for all $a\in A$.  
Suppose that the controlled vector field $f\in C(\R^n\times A;\R^n)$ is of class $C^2$ 
in a neighborhood of $\bar x$ and $f(x,\cdot)$ is convex, for all $x$ in the neighborhood.
If for some $C,r>0$ and $s\in ]1/3, 1/2]$ 
\begin{equation}\label{eqnewsestimate}
T(x)\leq C|x-\bar x|^s,\quad \hbox{for }x\in B_r(\bar x),
\end{equation}
then either there are $a_1,a_2\in A$ such that
\begin{equation}
\label{trans-br1}
[f(\cdot,a_1),f(\cdot,a_2)]\cdot \nabla u(\bar x)<0,
\end{equation}
or
\begin{equation}\label{trans-br2}
\mbox{there is } \; \bar a\in A \; \text{such that}\quad\nabla(\nabla u\cdot f(\cdot,\bar a))\cdot f(\bar x,\bar a)<0 .
\end{equation}
\end{prop}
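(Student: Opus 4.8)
The plan is to argue by contraposition. I negate the conclusion, assuming that for all $a_1,a_2,\bar a\in A$
\[
[f(\cdot,a_1),f(\cdot,a_2)]\cdot\nabla u(\bar x)\geq0,\qquad \nabla(\nabla u\cdot f(\cdot,\bar a))\cdot f(\bar x,\bar a)\geq0,
\]
and I will deduce that $T$ cannot decay faster than $d(x)^{1/3}$ near $\bar x$, so that (\ref{eqnewsestimate}) fails for every $s>1/3$. Set $g(x,a)=\nabla u(x)\cdot f(x,a)$ and $b_a=\nabla_x(\nabla u\cdot f(\cdot,a))(\bar x)$, so that the tangency hypothesis gives $g(\bar x,a)\equiv0$ and, by $u\in C^3$, $f\in C^2$, the spatial Taylor expansion $g(x,a)=b_a\cdot(x-\bar x)+O(|x-\bar x|^2)$. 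Writing $S(a,a'):=b_a\cdot f(\bar x,a')$, a direct computation identifies $S(a,a')-S(a',a)$ with $\pm[f(\cdot,a_1),f(\cdot,a_2)]\cdot\nabla u(\bar x)$ and $S(a,a)$ with $\nabla(\nabla u\cdot f(\cdot,a))\cdot f(\bar x,a)$; thus the two negated conditions say precisely that \emph{$S$ is a symmetric bilinear form on the controls with nonnegative diagonal}. Note also that $\nabla u(\bar x)\perp V:=\mathrm{span}\,f(\bar x,A)$.

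The core idea is to build a Lyapunov function of the form $W(x)=u(x)-u(\bar x)-\tfrac12 Q(x-\bar x)\cdot(x-\bar x)$, with $Q$ a symmetric matrix, chosen to cancel the first-order-in-$(x-\bar x)$ part of $\dot W$ for every control simultaneously. Since $\nabla W(x)\cdot f(x,a)=g(x,a)-Q(x-\bar x)\cdot f(x,a)=[b_a-Qf(\bar x,a)]\cdot(x-\bar x)+O(|x-\bar x|^2)$, I want $Qf(\bar x,a)=\Pi_V b_a$ for all $a$, where $\Pi_V$ is the orthogonal projection onto $V$. The symmetry of $S$ --- i.e.\ the failure of (\ref{trans-br1}) --- is exactly the integrability condition that makes this prescription define a consistent symmetric $Q$ on $V$ (extended by $0$ on $V^\perp$): if $\sum c_if(\bar x,a_i)=0$ then symmetry forces $\sum c_i\,\Pi_Vb_{a_i}=0$. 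With this choice one gets the key estimate $\nabla W(x)\cdot f(x,a)=\Pi_{V^\perp}(b_a)\cdot(x-\bar x)+O(|x-\bar x|^2)$.

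I then run $W$ along an arbitrary trajectory starting at $x_0=\bar x+\delta\,\nabla u(\bar x)$, $\delta>0$ small, where $u(x_0)-u(\bar x)\sim\delta$ and $d(x_0)\sim\delta$. Since $\nabla u(\bar x)\in V^\perp$ we have $W(x_0)=u(x_0)-u(\bar x)+O(\delta^2)\sim\delta>0$. When the trajectory first meets the target at some time $\tau=T(x_0)$, one has $u(x_\tau)-u(\bar x)\leq0$, while the tangential displacement $\int_0^\tau f(\bar x,a_t)\,dt=\tau\bar v$ has $\bar v\in \overline{\mathrm{co}}\,f(\bar x,A)=f(\bar x,A)$ by the convexity of $f(\bar x,\cdot)$; since $S(a,a)=Q(f(\bar x,a))\cdot f(\bar x,a)\geq0$ gives $Q\geq0$ on $f(\bar x,A)$, and the cross terms with the normal part $\delta\nabla u(\bar x)\in V^\perp$ drop out (the range of $Q$ lies in $V$), we obtain $Q(x_\tau-\bar x)\cdot(x_\tau-\bar x)\geq-o(\delta)$ and hence $W(x_\tau)\leq o(\delta)$. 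On the other hand, the key estimate and $|x_t-\bar x|\lesssim\tau$ give $|\tfrac{d}{dt}W(x_t)|\lesssim |\Pi_{V^\perp}(x_t-\bar x)|+|x_t-\bar x|^2\lesssim\delta+\tau^2$, so $|W(x_\tau)-W(x_0)|\lesssim\tau\delta+\tau^3$. Comparing with the drop $W(x_0)-W(x_\tau)\gtrsim\delta$ forces $\delta\lesssim\tau\delta+\tau^3$, i.e.\ $\tau\gtrsim\delta^{1/3}$. Thus $T(x_0)\gtrsim d(x_0)^{1/3}$ along the normal, contradicting (\ref{eqnewsestimate}) for any $s>1/3$ and small $\delta$.

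I expect the main obstacle to be twofold. Conceptually, the construction of the symmetric corrector $Q$ is the heart of the matter: it is possible exactly because all the Lie brackets $[f(\cdot,a_1),f(\cdot,a_2)]\cdot\nabla u(\bar x)$ vanish, and it is this cancellation (rather than a single-control computation as in Proposition \ref{prop2}) that removes the $O(\tau^2)$ descent available to general, arbitrarily-switching controls. Technically, the delicate point is the uniform control of the third-order remainders over the whole control class and over the window $0\le t\le\tau$; this is where the regularity $f\in C^2$, $u\in C^3$ and the convexity of $f(\bar x,\cdot)$ (used to confine the averaged velocity $\bar v$ to $f(\bar x,A)$, on which $Q\geq0$) are essential. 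An alternative, more analytic route would be a parabolic blow-up, rescaling the tangential variable by $\delta^{1/2}$, the normal variable by $\delta$ and time by $\delta^{1/2}$, and extracting a limiting reaching trajectory whose existence forces one of (\ref{trans-br1}), (\ref{trans-br2}); closedness of the rescaled reachable sets would again rely on convexity, and the same vanishing-bracket obstruction would reappear.
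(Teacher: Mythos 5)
The paper itself contains no proof of Proposition \ref{nec-cond-2}: it is explicitly ``recalled'' from the reference \cite{bfs}, so there is no in-paper argument to measure yours against. Taken on its own, your contrapositive argument is essentially correct and contains all the mechanisms such a proof needs. The negation of (\ref{trans-br1}), combined with the antisymmetry of the Lie bracket, does force the kernel $S(a,a')=b_a\cdot f(\bar x,a')=H_{f(\cdot,a'),f(\cdot,a)}u(\bar x)$ to be \emph{symmetric} (not merely of one sign); your consistency check $\sum_i c_i f(\bar x,a_i)=0\Rightarrow\sum_i c_i\Pi_V b_{a_i}\in V\cap V^\perp=\{0\}$ does make $Q$ well defined and symmetric on $V$; the negation of (\ref{trans-br2}) gives nonnegativity of $Q$ exactly on $f(\bar x,A)$, which is all you use; and the balance $c\delta\le C(\tau\delta+\tau^3)$ together with $\tau\to0$ correctly yields $\tau\gtrsim\delta^{1/3}$, contradicting (\ref{eqnewsestimate}) for any $s>1/3$. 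Three points deserve to be made explicit when you write this up. First, ``$f(x,\cdot)$ convex'' must be read as convexity of the vectogram $f(\bar x,A)$ (compact since $A$ is compact), so that the averaged velocity satisfies $\bar v=f(\bar x,\bar a)$ for some $\bar a\in A$ and $Q\bar v\cdot\bar v=S(\bar a,\bar a)\ge0$; this hypothesis is genuinely load-bearing, because a symmetric kernel with nonnegative diagonal need not be positive semidefinite, so without it the quadratic contribution of an arbitrarily switching control has no sign. Second, $\tau$ should be the hitting time of an $\varepsilon$-optimal (or Filippov-optimal) trajectory, and you should note that for small $\delta$ the trajectory stays in the neighbourhood where $\mathcal T=\{u\le u(\bar x)\}$ is the valid description of the target. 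Third, the uniformity in $a$ of the $O(|x-\bar x|^2)$ remainders follows from the paper's convention that $C^2$ regularity of $f$ means joint continuity of the spatial derivatives on $B_r(\bar x)\times A$. Finally, the corrector $Q$ can in fact be bypassed: expanding $u(x_\tau)-u(x_0)=\int_0^\tau\nabla u(x_t)\cdot f(x_t,a_t)\,dt$ directly produces the term $\int_0^\tau\int_0^t S(a_t,a_s)\,ds\,dt$, which by symmetry equals $\tfrac12\bigl(\int_0^\tau b_{a_t}\,dt\bigr)\cdot\bigl(\int_0^\tau f(\bar x,a_s)\,ds\bigr)=\tfrac{\tau^2}{2}S(\bar a,\bar a)\ge0$ after the same convexity step, with the remaining error terms bounded exactly as in your bookkeeping; your Lyapunov packaging is equivalent but slightly heavier.
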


\subsection{Nonsmooth targets}
 
 In this section we quickly discuss how our sufficient condition can be applied also to some nonsmooth targets.
 As usual we assume that the target for system (\ref{eqsys}) is described, at least locally around a given point $\bar x\in\R^n$, as the closure of an open set satisfying (\ref{eqtarget}) where $u:\R^n\to\R$ is a continuous function such that the level set $\{x:u(x)=u(\bar x)\}$ has empty interior. Therefore we drop smoothness of the target but will suppose that there is an inward ball touching the boundary at the given point in the following sense.
We assume that
\begin{equation}\label{eqinward}
\mbox{there is a function }\Phi\in C^3(\R^n)\mbox{ such that }u-\Phi\mbox{ has a minimum point at }\bar x\mbox{ and }\nabla\Phi(\bar x)\neq0.\tag{IC}
\end{equation}

\begin{cor}\label{coric}
Let $f:\R^n\times A\to\R^{n}$ be of class $C^2$ and let $u:\R^n\to\R$ be a continuous function so that (\ref{eqtarget}) holds in the neighborhood of $\bar x$ and (\ref{eqinward}) holds. Assume moreover that $\nabla \Phi\cdot f(\bar x,a)=0$, for all $a\in A$ and that
\begin{equation}
\begin{array}{ll}
\max_{(a_1,a_2)\in A\times A}\{ -{\mbox Tr}(D^2\Phi\; f(\bar x,a_1)\otimes f(\bar x,a_2))\\
-((D(f(\bar x,a_1)+f(\bar x,a_2))(f(\bar x,a_1)+f(\bar x,a_2))+[f(\cdot,a_1),f(\cdot,a_2)](\bar x))\cdot\nabla \Phi(\bar x)\}>0.
\end{array}\end{equation}
Then the target $\{x:u(x)\leq u(\bar x)\}$ is STLA for the system (\ref{eqsys}) at $\bar x$ and the estimate (\ref{eqestimate}) holds for the minimum time function.
\end{cor}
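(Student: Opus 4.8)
The plan is to reduce the nonsmooth case to the already-proven smooth case, Theorem \ref{thm1}, by using the auxiliary function $\Phi$ as a smooth lower barrier for the target. The key structural observation is that the hypothesis (\ref{eqinward}) says $u-\Phi$ attains a minimum at $\bar x$, so after subtracting the constant $(u-\Phi)(\bar x)$ we may assume $u(x)\geq \Phi(x) - (u(\bar x)-\Phi(\bar x))$, i.e. the sublevel set of $\Phi$ at level $\Phi(\bar x)$ is contained in the sublevel set of $u$ at level $u(\bar x)$. Concretely, I would first replace $\Phi$ by $\Phi + (u(\bar x)-\Phi(\bar x))$ so that $\Phi(\bar x)=u(\bar x)$ and the minimum condition reads $u(x)-\Phi(x)\geq u(\bar x)-\Phi(\bar x)=0$ on the neighborhood. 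Then
\[
\{x : \Phi(x)\leq \Phi(\bar x)\} = \{x:\Phi(x)\leq u(\bar x)\} \subseteq \{x: u(x)\leq u(\bar x)\} = \mathcal T,
\]
since $\Phi(x)\leq u(\bar x)$ forces $u(x)\leq \Phi(x)+ (u(\bar x)-\Phi(\bar x))=\Phi(x)\leq u(\bar x)$.

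Next I would apply Theorem \ref{thm1} directly to the \emph{smooth} target $\mathcal T_\Phi := \{x:\Phi(x)\leq \Phi(\bar x)\}$. The hypotheses of the Corollary are exactly tailored for this: $\Phi\in C^3$, $\nabla\Phi(\bar x)\neq 0$, the tangency condition $\nabla\Phi\cdot f(\bar x,a)=0$ for all $a\in A$, and the strict second-order inequality (which is precisely (\ref{eqpde}) written for $\Phi$ in place of $u$). Theorem \ref{thm1} therefore yields that $\mathcal T_\Phi$ is STLA at $\bar x$ and that the minimum time function $T_\Phi$ for the target $\mathcal T_\Phi$ satisfies $T_\Phi(x)\leq C|x-\bar x|^{1/2}$ near $\bar x$.

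The final step is a monotonicity comparison of minimum time functions under target inclusion. Since $\mathcal T_\Phi\subseteq\mathcal T$, any trajectory of (\ref{eqsys}) that reaches $\mathcal T_\Phi$ at some time has already entered (or entered no later than) $\mathcal T$; hence $T(x)\leq T_\Phi(x)$ for every $x$. Combining with the estimate from Theorem \ref{thm1} gives
\[
T(x)\leq T_\Phi(x)\leq C|x-\bar x|^{1/2},\qquad x\in B_r(\bar x),
\]
which is exactly (\ref{eqestimate}); continuity of $T$ at $\bar x$ (hence STLA of $\mathcal T$ at $\bar x$) follows because $T(\bar x)=0$ and $T\geq 0$. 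I expect the main conceptual point — and the only place any care is needed — to be the direction and correctness of the inclusion $\mathcal T_\Phi\subseteq\mathcal T$ together with the resulting inequality $T\leq T_\Phi$; everything else is a verbatim citation of Theorem \ref{thm1}. One should also note that the assumption that $\{x:u(x)=u(\bar x)\}$ has empty interior guarantees $\bar x\in\partial\mathcal T$, so that the attainability statement is nontrivial, and that the inward-ball interpretation of (\ref{eqinward}) is what makes this barrier argument geometrically natural.
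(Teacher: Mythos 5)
Your strategy is exactly the paper's: replace $\mathcal T$ by the smooth set $\mathcal T_\Phi=\{x:\Phi(x)\leq\Phi(\bar x)\}$, apply Theorem \ref{thm1} to $\Phi$, and use monotonicity of the minimum time function under inclusion of targets. However, the one step you yourself identify as the only place where care is needed is carried out with the inequality in the wrong direction. From (\ref{eqinward}), $u-\Phi$ has a \emph{minimum} at $\bar x$, so after your normalization you get $u(x)\geq\Phi(x)$ near $\bar x$, i.e.\ $\Phi$ touches $u$ from \emph{below}. This yields $\{x:u(x)\leq u(\bar x)\}\subseteq\{x:\Phi(x)\leq\Phi(\bar x)\}$, that is $\mathcal T\subseteq\mathcal T_\Phi$ --- the opposite of the inclusion you need. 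Your line ``$\Phi(x)\leq u(\bar x)$ forces $u(x)\leq\Phi(x)+(u(\bar x)-\Phi(\bar x))$'' silently uses $u-\Phi\leq u(\bar x)-\Phi(\bar x)$, i.e.\ a \emph{maximum} of $u-\Phi$ at $\bar x$. With the hypothesis read literally, reaching $\mathcal T_\Phi$ tells you nothing about reaching $\mathcal T$, so the comparison $T\leq T_\Phi$ is unavailable and the argument collapses at that point.

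To be fair, the paper's own one-line proof asserts the same inclusion $\mathcal T_\Phi\subseteq\mathcal T$ without justification, and the geometric description of (\ref{eqinward}) as an ``inward ball'' makes clear that the intended condition is that $\{x:\Phi(x)\leq\Phi(\bar x)\}$ sits inside the target, i.e.\ that $u-\Phi$ has a \emph{maximum} (equivalently, that $\Phi-u$ has a minimum) at $\bar x$. Under that reading your proof is complete and coincides with the paper's: the normalization, the inclusion, the application of Theorem \ref{thm1} to the $C^3$ function $\Phi$ (whose gradient is nonzero and which satisfies the tangency and strict second-order conditions by hypothesis), and the monotonicity $T\leq T_\Phi$ are all correct. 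So the gap is not in your strategy but in the sign bookkeeping: as submitted, the displayed chain of inequalities is false under the stated hypothesis, and you should either flip the extremum in (\ref{eqinward}) or flag the discrepancy explicitly rather than paper over it.
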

\begin{proof}
Just observe that locally around $\bar x$ we have ${\mathcal T}_\Phi=\{x:\Phi(x)\leq\Phi(\bar x)\}\subset{\cal T}$, and ${\mathcal T}_\Phi$ is STLA at $\bar x$ by Theorem \ref{thm1}. Therefore $\mathcal T$ is STLA at $\bar x$.
\end{proof}

In the second example, we consider targets with outward corners by assuming that locally around a point $\bar x$ the target can be described as
\begin{equation}\label{eqoutward}
{\cal T}=\{x:u_i(x)\leq u_i(\bar x),\;i=1,\dots,k\}=\cap_{i=1}^k\{x:u_i(x)\leq u_i(\bar x)\},
\end{equation}
where $u_i\in C^3(\R^n)$ and $\nabla u_i(\bar x)\neq0$, $i=1,\dots,k$.

\begin{cor}\label{coroc}
Let $f:\R^n\times A\to\R^{n}$ be of class $C^2$ and let $u_i:\R^n\to\R$ be a family of functions of class $C^3$, $i=1,\dots,k$. 
Let $\bar x\in\R^n$ be a point with the following property: we can find $a_1,a_2\in A$ such that for each $i\in\{1,\dots,k\}$ either there are $\bar a\in A$, $\lambda>0$ such that
\begin{equation}\label{firstor}
\lambda f(\cdot,\bar a)\equiv f(\cdot,a_1)+f(\cdot,a_2),\quad
-f(\bar x,\bar a)\cdot \nabla u_i(\bar x)<0
\end{equation}
or $(f(\bar x,a_1)+f(\bar x,a_2))\cdot \nabla u_i(\bar x)=0$ and
\begin{equation}
\begin{array}{ll}
-{\mbox Tr}(D^2u\; f(\bar x,a_1)\otimes f(\bar x,a_2))\\
\quad -(D(f(\bar x,a_1)+f(\bar x,a_2))(f(\bar x,a_1)+f(\bar x,a_2))+[f(\cdot,a_1),f(\cdot,a_2)](\bar x)\cdot\nabla u_i(\bar x)\}>0.
\end{array}\end{equation}
Then the target ${\cal T}=\cap_{i=1,\dots,k}\{x:u_i(x)\leq u(\bar x)\}$ is STLA for the system (\ref{eqsys}) at $\bar x$.
\end{cor}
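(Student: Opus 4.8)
The plan is to reach the intersection target $\mathcal T=\cap_{i=1}^k\{x:u_i(x)\leq u_i(\bar x)\}$ with a single switched trajectory of the form (\ref{eqswitch}), built from the one pair $(a_1,a_2)$ supplied by the hypothesis. Writing $f(\cdot)=f(\cdot,a_1)$ and $g(\cdot)=f(\cdot,a_2)$, the decisive feature is that \emph{the same} trajectory (\ref{eqswitch}) serves every constraint index $i$: if along it each $u_i$ drops below $u_i(\bar x)$ for $t$ in one common time window, then its endpoint lies in all the half-spaces at once, i.e.\ in $\mathcal T$. First I would write down, for each $i$ and for a starting point $x^1\in B_\delta(\bar x)$, the expansion of $u_i(x^1_{2t})-u_i(\bar x)$ along (\ref{eqswitch}) given by (\ref{eqest}) in Proposition \ref{prop2}, and split the indices according to the two alternatives in the statement.

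For an index $i$ in the second-order regime the first-order coefficient $\nabla u_i\cdot(f+g)(\bar x)$ vanishes by the tangency assumption, while the coefficient of $t^2/2$ is controlled by the strict inequality; hence the computation leading to (\ref{eq2nd}) in the proof of Theorem \ref{thm1} yields, for suitable $\rho_i,\tilde L_i>0$,
\[
u_i(x^1_{2t})-u_i(\bar x)\leq \tilde L_i|x^1-\bar x|-\tfrac{\rho_i}{4}t^2,
\]
which is nonpositive as soon as $t\geq 2\sqrt{\tilde L_i/\rho_i}\,|x^1-\bar x|^{1/2}$ and $t$ stays within the validity range of the expansion. For an index $i$ in the first-order regime the identity $\lambda f(\cdot,\bar a)\equiv f(\cdot,a_1)+f(\cdot,a_2)$ makes the leading drift of (\ref{eqswitch}) a positive multiple of $f(\cdot,\bar a)$ at every point, so the inward Petrov inequality $-\nabla u_i\cdot f(\bar x,\bar a)>0$ of (\ref{firstor}) persists in a neighbourhood (this is exactly where I use that (\ref{firstor}) is an identity and not merely a pointwise relation); the argument of Remark \ref{remcontrollability}, cf.\ (\ref{eq1st}) and (\ref{eq1storder}), then gives the faster, first-order decay
\[
u_i(x^1_{2t})-u_i(\bar x)\leq -\rho_i t+Ct^2+L_i|x^1-\bar x|,
\]
negative on a much larger range of $t$.

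Finally I would glue the finitely many windows. Each second-order constraint is met for $t$ in an interval $[\,2\sqrt{\tilde L_i/\rho_i}\,|x^1-\bar x|^{1/2},\,t_0\,]$, and each first-order constraint for $t$ in an interval whose left endpoint is of order $|x^1-\bar x|$ and whose right endpoint is a fixed positive constant; since $k$ is finite, the second-order thresholds tend to $0$ as $x^1\to\bar x$, and all upper cut-offs are bounded below by some fixed $t_0>0$, the intersection of all these windows is a nonempty interval once $x^1$ is close enough to $\bar x$. Picking any $t$ in it makes $u_i(x^1_{2t})\leq u_i(\bar x)$ for every $i$ simultaneously, so $x^1_{2t}\in\mathcal T$ with reaching time $2t\to0$ as $x^1\to\bar x$; together with Lemma \ref{lemesttraj}, which keeps the trajectory inside the reference ball, this gives continuity of $T$ at $\bar x$ and hence STLA. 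The main obstacle I expect is precisely this matching of time windows for constraints of different orders — checking that the slow second-order constraints and the $|x^1-\bar x|$ correction terms do not close the common window before the fast first-order ones open it — rather than any individual estimate, each of which is already contained in the proofs of Theorem \ref{thm1} and Remark \ref{remcontrollability}.
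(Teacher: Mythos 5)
Your proposal is correct and follows essentially the same route as the paper, which simply invokes Theorem \ref{thm1} and Remark \ref{remcontrollability} to conclude that the single switched trajectory generated by the pair $(a_1,a_2)$ reaches each set $\{x:u_i(x)\leq u_i(\bar x)\}$, hence their intersection. The only difference is that you make explicit the matching of the time windows for constraints of different orders (second-order thresholds of size $|x^1-\bar x|^{1/2}$ versus first-order ones of size $|x^1-\bar x|$, against fixed upper cut-offs), a compatibility check the paper's one-line proof leaves implicit but which works exactly as you describe.
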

\begin{proof}
In view of Theorem \ref{thm1} and Remark \ref{remcontrollability} our assumptions allow to conclude that each set $\{x:u_i(x)\leq u(\bar x)\}$ is STLA at $\bar x$ for all $i=1,\dots,k$ and the same trajectory reaches each of them. Hence even their intersection is STLA at $\bar x$ and an estimate like (\ref{eqestimate}) holds for the minimum time function.
\end{proof}

\section{Symmetric systems}

In this section we want to discuss how Theorem \ref{thm1} applies to symmetric systems. We will find out that our sufficient condition is also necessary in this case and quite easy to check and can be reformulated as an algebraic condition. For simplicity we always work in the neighborhood of a given point $\bar x\in\partial \mathcal T$ and suppose that the target locally satisfies
\begin{equation}
\mathcal T=\{x:u(x)\leq u(\bar x)\},
\end{equation}
where $u\in C^1(\R^n)$ at least and $\nabla u(\bar x)\neq0$. 
In the case of symmetric systems the control vector field appears as $f(x,a)=\sigma(x)a$,
where $\sigma:\R^n\to\R^{n\times m}$ and $ a\in B_1(0)=\{a\in \R^m:|a|\leq1\}$.
For convenience we will indicate as $\sigma_i:\R^n\to\R^n$, $i=1,\dots,m$ the vector fields provided by the columns of the matrix valued $\sigma$. We will assume  at least $\sigma\in C^1(\R^n;\R^{n\times m})$. Therefore (\ref{eqsys}) becomes
\begin{equation}\label{eqsyssym}
\left\{\begin{array}{ll}\dot x_t=\sigma(x_t)a_t,\\
x_0\in {\mathbb R}^n,
\end{array}\right.
\end{equation}
where $a.:[0,+\infty)\to B_1(0)$, will always be piecewise constant.
The first lemma shows how we can rewrite the second order hamiltonians in the case of symmetric systems, introducing a bilinear form in the controls.
\begin{lem}\label{lemtech} Let $\sigma:\R^n\to\R^{n\times m}$ be of class $C^1$ and $u:\R^n\to\R$ be of class $C^2$. Let $a_1,a_2\in B_1(0)$ and $f=\sigma a_1,g=\sigma a_2$. 
Let $S:\R^n\to \R^{m\times m}$ be the continuous function $S(x)=D(\nabla u\;\sigma)\sigma(x)$.
Then:

\noindent
(i)  we can rewrite the second order hamiltonian as
$$H_f\circ H_gu(x)=H_{f,g}u(x)=S(x)\;a_1\cdot a_2,\quad a_1,a_2\in B_1(0).
$$
In particular $S_{i,j}(x)=H_{\sigma_j,\sigma_i}(x).$

\noindent
(ii) We can express the product with the Lie bracket
$$[f,g]\cdot\nabla u(x)=2S^e(x)a_1\cdot a_2,
$$
where $S^e$ denotes the skew symmetric part of $S$.
In particular
$$S^e(x)=\left(\frac12[\sigma_j,\sigma_i]\cdot \nabla u(x)\right)_{i,j=1,\dots,m}$$
and the matrix $S(x)$ is symmetric if and only if all Lie brackets among the vector fields $\sigma_i(x)$, $i=1,\dots,m$, are orthogonal to $\nabla u(x)$.

\noindent
(iii) The symmetric part of $S$ is 
$$\begin{array}{ll}
S^*(x)={}^t\sigma\;D^2u\;\sigma(x)
+\left(\frac12(D\sigma_j\;\sigma_i+D\sigma_i\;\sigma_j)\cdot\nabla u(x)\right)_{i,j=1,\dots,m}\end{array}.$$
\end{lem}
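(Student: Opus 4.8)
The plan is to treat all three parts as bilinear bookkeeping built on two facts already established: the bilinearity of the second order hamiltonian $H_{\cdot,\cdot}u$ in its two vector field arguments, and the identity $\nabla u\cdot[f,g]=H_{f,g}u-H_{g,f}u$ obtained in the proof of Proposition \ref{prop2}. The step that underpins everything is reading off the entries of $S$. Writing $\nabla u\,\sigma$ for the $\R^m$-valued map with components $(\nabla u\cdot\sigma_1,\dots,\nabla u\cdot\sigma_m)$, its Jacobian $D(\nabla u\,\sigma)$ has $i$-th row $\nabla(\nabla u\cdot\sigma_i)$, so the $(i,j)$ entry of $S=D(\nabla u\,\sigma)\sigma$ is $\nabla(\nabla u\cdot\sigma_i)\cdot\sigma_j$. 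By the defining formula $H_{g,f}u=\nabla(\nabla u\cdot f)\cdot g$ this is exactly $H_{\sigma_j,\sigma_i}u$, which gives the identification $S_{i,j}=H_{\sigma_j,\sigma_i}u$ asserted in (i).

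For (i) itself I would then expand $f=\sigma a_1=\sum_i(a_1)_i\sigma_i$ and $g=\sigma a_2=\sum_j(a_2)_j\sigma_j$ and use bilinearity to write $H_{f,g}u=\sum_{i,j}(a_1)_i(a_2)_j H_{\sigma_i,\sigma_j}u=\sum_{i,j}(a_1)_i(a_2)_j S_{j,i}$, and finally recognise this double sum as the quadratic form $S a_1\cdot a_2$. The single delicate point in the whole lemma is the index/transpose convention here: one must check that the sum collapses to $S a_1\cdot a_2$ and not to its transpose, so I would track the indices explicitly rather than quote the quadratic form by inspection.

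Part (ii) follows by subtraction. Combining $\nabla u\cdot[f,g]=H_{f,g}u-H_{g,f}u$ with (i) gives $[f,g]\cdot\nabla u=S a_1\cdot a_2-S a_2\cdot a_1=(S-{}^tS)a_1\cdot a_2=2S^e a_1\cdot a_2$, where $S^e=\tfrac12(S-{}^tS)$. Reading off entries from $S^e=\tfrac12(S-{}^tS)$ together with the identification in (i) yields $S^e_{i,j}=\tfrac12(S_{i,j}-S_{j,i})=\tfrac12[\sigma_j,\sigma_i]\cdot\nabla u$, using $\nabla u\cdot[\sigma_j,\sigma_i]=H_{\sigma_j,\sigma_i}u-H_{\sigma_i,\sigma_j}u$; the symmetry criterion is then immediate, since $S={}^tS$ iff $S^e=0$ iff every $[\sigma_j,\sigma_i]\cdot\nabla u$ vanishes.

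Part (iii) is a direct computation of the symmetric part. From $S^*_{i,j}=\tfrac12(S_{i,j}+S_{j,i})=\tfrac12(H_{\sigma_j,\sigma_i}u+H_{\sigma_i,\sigma_j}u)$ I would substitute the explicit expression $H_{g,f}u=D^2u\,f\cdot g+\nabla u\cdot Df\,g$ into each hamiltonian. The two second order terms coincide by symmetry of $D^2u$ and combine into $D^2u\,\sigma_i\cdot\sigma_j=({}^t\sigma\,D^2u\,\sigma)_{i,j}$, while the first order terms give $\tfrac12(D\sigma_i\,\sigma_j+D\sigma_j\,\sigma_i)\cdot\nabla u$, which is precisely the stated matrix. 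There is no genuine obstacle in this lemma: it is an exercise in bilinear algebra, and the only thing to watch is the consistent bookkeeping of indices and transposes noted above, so that the three decompositions of $S$, $S^e$ and $S^*$ match the asserted entrywise formulas.
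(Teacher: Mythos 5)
Your proposal is correct and follows essentially the same route as the paper: the entrywise identification $S_{i,j}=H_{\sigma_j,\sigma_i}u$, the subtraction identity $\nabla u\cdot[f,g]=H_{f,g}u-H_{g,f}u$ for (ii), and the split of $H_{g,f}u$ into $D^2u\,f\cdot g+\nabla u\cdot Df\,g$ for (iii) are exactly the paper's computations, with your bilinearity expansion in (i) being just a componentwise rewriting of the paper's one-line matrix identity $\nabla(\nabla u\,\sigma a_2)\cdot\sigma a_1={}^tD(\nabla u\,\sigma)a_2\cdot\sigma a_1=S a_1\cdot a_2$. Your index bookkeeping (in particular $H_{\sigma_i,\sigma_j}u=S_{j,i}$, so the sum collapses to $Sa_1\cdot a_2$ and not its transpose) is consistent with the paper's convention $H_{g,f}u=\nabla(\nabla u\cdot f)\cdot g$.
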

\begin{proof}
(i) It is just a simple computation
$$\begin{array}{ll}
H_{f,g}u(x)&=\nabla(\nabla u\; \sigma a_2)(x)\cdot \sigma(x) a_1\\&=
{}^tD(\nabla u\;\sigma)(x)a_2\cdot\sigma(x) a_1=D(\nabla u\;\sigma)(x)\sigma(x)\;a_1\cdot a_2.
\end{array}$$

\noindent
(ii) Again we compute, since the second order terms cancel out,
$$\begin{array}{ll}
[f,g]\cdot\nabla u(x)&=H_{f,g}u(x)-H_{g,f}u(x)\\
&=S(x)\;a_1\cdot a_2-S(x)\;a_2\cdot a_1=(S(x)-{}^tS(x))\;a_1\cdot a_2
=2S^e(x)a_1\cdot a_2.
\end{array}$$

\noindent
(iii) As easily seen in coordinates
$$\begin{array}{ll}
S(x)=D(\nabla u \;\sigma)\sigma(x)={}^t\sigma\;D^2u\;\sigma(x)
+(D\sigma_i\;\sigma_j\cdot\nabla u(x))_{i,j=1,\dots,m},
\end{array}$$
from which the symmetric part follows.
\end{proof}
We can rewrite the second order term in (\ref{eqest}) from which we derived our sufficient conditions in two ways, that are convenient in different ways, by using the matrix $S$. We introduce the matrix valued function $K:\R^n\to\R^{2m\times 2m}$,
\begin{equation}\label{eqkdef}
K(x)=\left(\begin{array}{cc}S^*(x)\quad&{}^tS(x)\\S(x)&S^*(x)
\end{array}\right).
\end{equation}
\begin{rem}
Notice that $K(x)$ is symmetric for any $x\in \R^n$. Moreover if
$\sigma:\R^n\to\R^{n\times m}$ is of class $C^1$ and $u:\R^n\to\R$ is of class $C^2$ then for all $a_1,a_2\in B_1(0)$ we get
\begin{equation}\label{lemtech2}
\begin{array}{ll}
K(x)\left(\begin{array}{c}a_1\\a_2\end{array}\right)\cdot \left(\begin{array}{c}a_1\\a_2\end{array}\right)
=S(x)a_1\cdot a_1+S(x)a_2\cdot a_2+2S(x)a_1\cdot a_2\\
=S^*(x)(a_1+a_2)\cdot(a_1+a_2)+2S^e(x)a_1\cdot a_2.
\end{array}\end{equation}
In view of Lemma \ref{lemtech} it is therefore clear that the quadratic form of $-K(\bar x)$ is what appears in (\ref{eqpde}) inside the max operation.
\end{rem}

We can now rephrase Proposition \ref{prop2} for symmetric systems.
\begin{prop}\label{prop3}
Let $t>0$ and $\sigma:\R^n\to\R^{n\times m}$ be of class $C^1$. Let $f=\sigma a_1$, $g=\sigma a_2$, for $a_1,a_2\in B_1(0)$ and $u:\R^n\to\R$ be a function of class $C^2$. The trajectory (\ref{eqswitch}) satisfies
\begin{equation}\label{eqestsym}\begin{array}{ll}
u(x_{2t})-u(x_0)=\nabla u\cdot\sigma(x_0)(a_1+a_2)t+K(x_0)\left(\begin{array}{c}a_1\\a_2\end{array}\right)\cdot \left(\begin{array}{c}a_1\\a_2\end{array}\right)\frac{t^2}2+o(t^2).
\end{array}\end{equation}
If in particular the vector fields $f,g$ are orthogonal to $\nabla u(x_0)$ at $x_0$, then
\begin{equation}\label{eqestsym2}\begin{array}{ll}
u(x_{2t})-u(x_0)=K(x_0)\left(\begin{array}{c}a_1\\a_2\end{array}\right)\cdot \left(\begin{array}{c}a_1\\a_2\end{array}\right)\frac{t^2}2+o(t^2).
\end{array}\end{equation}
\end{prop}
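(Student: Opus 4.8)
The plan is to specialize Proposition~\ref{prop2} to the symmetric case $f=\sigma a_1$, $g=\sigma a_2$ and then to repackage the second order coefficient by means of the matrix $S$ of Lemma~\ref{lemtech} together with the identity \eqref{lemtech2}. Since $\sigma\in C^1$ and $u\in C^2$, the hypotheses of Proposition~\ref{prop2} are met, so I may quote its conclusion verbatim; from there the argument is purely algebraic and no new analytic estimate is needed. In particular the remainder $o(t^2)$ carries over unchanged.

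First I would take the second line of the estimate \eqref{eqest}, which reads
\begin{equation*}
u(x_{2t})-u(x_0)=\nabla u\cdot(f+g)(x_0)\,t+\big(H_{f,f}u(x_0)+H_{g,g}u(x_0)+2H_{f,g}u(x_0)\big)\tfrac{t^2}2+o(t^2).
\end{equation*}
For the first order term, linearity of the control action gives $f+g=\sigma(a_1+a_2)$, hence $\nabla u\cdot(f+g)(x_0)=\nabla u\cdot\sigma(x_0)(a_1+a_2)$, which is already the coefficient of $t$ asserted in \eqref{eqestsym}. For the second order coefficient I would invoke Lemma~\ref{lemtech}(i), applied to each pair drawn from $\{\sigma a_1,\sigma a_2\}$, to obtain $H_{f,f}u(x_0)=S(x_0)a_1\cdot a_1$, $H_{g,g}u(x_0)=S(x_0)a_2\cdot a_2$ and $H_{f,g}u(x_0)=S(x_0)a_1\cdot a_2$. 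Their combination $H_{f,f}u(x_0)+H_{g,g}u(x_0)+2H_{f,g}u(x_0)$ is then exactly the right-hand side of \eqref{lemtech2}, and so coincides with the quadratic form of $K(x_0)$ evaluated at the stacked control vector $(a_1,a_2)\in\R^{2m}$, by the definition \eqref{eqkdef} of $K$. Substituting back yields the first identity \eqref{eqestsym}.

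Finally, the special case is immediate: orthogonality of $f,g$ to $\nabla u(x_0)$ means $\nabla u\cdot\sigma(x_0)a_1=\nabla u\cdot\sigma(x_0)a_2=0$, so the first order term in \eqref{eqestsym} vanishes and the formula collapses to \eqref{eqestsym2}. There is no genuine obstacle in this proposition: all the substance resides in Proposition~\ref{prop2}, Lemma~\ref{lemtech} and the identity \eqref{lemtech2}, and the only point demanding attention is keeping the index and sign conventions of the second order Hamiltonian $H_{f,g}=H_f\circ H_g$ consistent when its arguments are specialized to $\sigma a_1$ and $\sigma a_2$.
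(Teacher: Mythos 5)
Your proof is correct and is exactly the argument the paper intends: the paper states Proposition~\ref{prop3} without proof as a direct rephrasing of Proposition~\ref{prop2}, obtained by writing $f+g=\sigma(a_1+a_2)$, converting the Hamiltonians via Lemma~\ref{lemtech}(i), and recognizing the combination $H_{f,f}u+H_{g,g}u+2H_{f,g}u$ as the quadratic form of $K$ through the identity \eqref{lemtech2}. Your handling of the convention $H_{f,g}=H_f\circ H_g$ and of the vanishing first order term in the orthogonal case is consistent with the paper's definitions.
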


We proceed computing the minimum of the function ($K$ is as in (\ref{eqkdef})), for a fixed $\bar x\in\R^n$,
\begin{equation}\label{eqmin}
h(a_1,a_2)=K(\bar x)\left(\begin{array}{cc}a_1\\ a_2\end{array}\right)\cdot \left(\begin{array}{cc}a_1\\ a_2\end{array}\right),\quad |a_1|,|a_2|\leq 1,\end{equation}
in order to determine the best decrease rate of single switch trajectories for system (\ref{eqsys}) at a point $\bar x$ where all available vector fields are tangent to the level set of a given smooth function.
Then we characterise when it is strictly negative also through the properties of $S(\bar x)$. This requires some linear algebra.
\begin{prop}\label{propmatrk}
The matrix $K({\bar x})$ has $0$ as an eigenvalue. If (\ref{eqmin}) attains a negative minimum, then it is reached at an eigenvector $v=(a_1,a_2)$ of $K(\bar x)$ with minimal eigenvalue $\lambda$ and we have $|a_1|=|a_2|=1$, $h(v)=2\lambda$.
\end{prop}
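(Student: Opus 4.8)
The plan is to treat this as a statement about the real symmetric matrix $K(\bar x)$ (its symmetry was noted right after (\ref{eqkdef})) and to compare the constrained minimum of $h$ on the compact set $C=\{(a_1,a_2):|a_1|\le1,\ |a_2|\le1\}$ with the least eigenvalue $\lambda$ of $K(\bar x)$. By the spectral theorem I fix an orthonormal eigenbasis $e_1,\dots,e_{2m}$ with eigenvalues $\mu_k$, write $v=\binom{a_1}{a_2}$, and use the identity from (\ref{lemtech2}), namely $h(v)=S^*(a_1+a_2)\cdot(a_1+a_2)+2S^e a_1\cdot a_2$.

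First I would settle the sign of $\lambda$. Evaluating on $v=(a,-a)$ kills the first term and, since $S^e$ is skew-symmetric, $S^e a\cdot a=0$; hence $h(a,-a)=0$ for every $a$. Thus $h$ vanishes on the $m$-dimensional subspace $\{(a,-a)\}$, so $K(\bar x)$ is not positive definite and $\lambda\le0$. For the sharper assertion that $0$ lies in the spectrum I would exhibit an actual null vector: a direct computation gives $K(\bar x)\binom{a}{-a}=\binom{S^e a}{S^e a}$, which vanishes as soon as $a\in\ker S^e$, and the skew-symmetric $S^e$ of Lemma~\ref{lemtech}(ii) has nontrivial kernel (automatically so when $m$ is odd), producing the eigenvalue $0$ with eigenvector $\binom{a}{-a}$. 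I flag this as the one place where the skew (Lie-bracket) part $S^e$ must be handled carefully.

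Next comes the lower bound together with the \emph{balance lemma}, which I expect to be the main obstacle. For $v\in C$ one has $|v|^2=|a_1|^2+|a_2|^2\le2$, and expanding $h(v)=\sum_k\mu_k(e_k\cdot v)^2\ge\lambda|v|^2$; since $\lambda\le0$ and $|v|^2\le2$ this yields $h(v)\ge2\lambda$, so $\min_C h\ge2\lambda$. The structural heart is that \emph{every eigenvector with nonzero eigenvalue is balanced}, $|a_1|=|a_2|$. I would prove this directly from the two block rows of $K(\bar x)v=\lambda v$, that is $S^*a_1+{}^tS a_2=\lambda a_1$ and $Sa_1+S^*a_2=\lambda a_2$. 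Pairing the first with $a_2$ and the second with $a_1$, using ${}^tS a_2\cdot a_2=S^*a_2\cdot a_2$, $Sa_1\cdot a_1=S^*a_1\cdot a_1$ and the symmetry of $S^*$, and subtracting gives $S^*a_1\cdot a_1=S^*a_2\cdot a_2$; pairing instead the first with $a_1$ and the second with $a_2$ and subtracting gives $S^*a_1\cdot a_1-S^*a_2\cdot a_2=\lambda(|a_1|^2-|a_2|^2)$. Combining the two forces $\lambda(|a_1|^2-|a_2|^2)=0$, so $|a_1|=|a_2|$ whenever $\lambda\neq0$. This is exactly why the minimizer is pinned to the corner torus $\{|a_1|=|a_2|=1\}$ rather than to a generic face of $\partial C$.

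Finally I assemble the conclusion. If $h$ attains a negative minimum then $2\lambda\le\min_C h<0$, so $\lambda<0$; taking a unit $\lambda$-eigenvector, the balance lemma makes its two halves have norm $1/\sqrt2$, and rescaling by $\sqrt2$ gives $v=(a_1,a_2)$ with $|a_1|=|a_2|=1$, hence $v\in C$ and $h(v)=\lambda|v|^2=2\lambda$; thus $\min_C h=2\lambda$ is realized on the corner torus. Conversely, any minimizer $v$ satisfies $h(v)=2\lambda$, so both inequalities in $h(v)\ge\lambda|v|^2\ge2\lambda$ are equalities: the first forces $v$ into the $\lambda$-eigenspace (as $\lambda$ is the least eigenvalue), the second forces $|v|^2=2$, and the balance lemma then gives $|a_1|=|a_2|=1$. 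Hence the negative minimum is attained precisely at a minimal-eigenvalue eigenvector $v=(a_1,a_2)$ with $|a_1|=|a_2|=1$ and value $h(v)=2\lambda$, as claimed.
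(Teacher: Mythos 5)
Your treatment of the substantive claim (the second sentence of the proposition) is correct and follows the paper's own route essentially step for step: the same Rayleigh-quotient bound $h(v)\ge\lambda|v|^2\ge 2\lambda$ on the ball of radius $\sqrt2$ containing $B_1(0)\times B_1(0)$, and the same ``balance lemma'' obtained by pairing the two block rows of $K(\bar x)v=\lambda v$ first crosswise and then diagonally to get $\lambda(|a_1|^2-|a_2|^2)=0$. Your closing equality-case analysis is, if anything, spelled out more carefully than in the paper.

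The one genuine problem is the clause ``$K(\bar x)$ has $0$ as an eigenvalue,'' and you have correctly isolated where it hinges: $K(\bar x)\binom{a}{-a}=\binom{S^e a}{S^e a}$, so the vectors $(a,-a)$ give null eigenvectors exactly for $a\in\ker S^e$. But your next assertion, that the skew-symmetric $S^e$ always has nontrivial kernel, is false when $m$ is even: a skew-symmetric matrix of even order can be invertible, and the paper's own Heisenberg system (Example \ref{ex4}) realizes this, with $S^e(0,0,1)=\bigl(\begin{smallmatrix}0&-1\\1&0\end{smallmatrix}\bigr)$ and $K(0,0,1)$ having eigenvalues $1\pm\sqrt2$, each of multiplicity two and none equal to zero. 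So that clause of the proposition is simply not true in general (the paper's own justification --- ``$h(a,-a)=0$ for all $a$, thus $0$ is an eigenvalue'' --- is a non sequitur, since a quadratic form can vanish on a subspace without $0$ being in the spectrum, e.g.\ $\mathrm{diag}(1,-1)$ on the line $x=y$). What is true, what you prove, and what the rest of the paper actually uses, is only that $\min h\le 0$, equivalently $\lambda\le 0$; you should state the conclusion in that weaker form rather than claim the zero eigenvalue, and you may add that $0\in\mathrm{spec}\,K(\bar x)$ does hold whenever $\ker S^e\neq\{0\}$, in particular for $m$ odd or whenever all Lie brackets $[\sigma_i,\sigma_j]$ are orthogonal to $\nabla u(\bar x)$.
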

\begin{proof}
It is clear that the minimum of $h$ in (\ref{eqmin}) is nonpositive, as by definition of $K$ in (\ref{eqkdef}), $h(a,-a)=0$ for all $a\in B_1(0)$, thus 0 is an eigenvalue of $K$. We will not write the dependence on $\bar x$ below. Also notice that the minimum of $h$ in the $\R^{2m}-$ball $B_{\sqrt{2}}((0,0))$, which contains $B_1(0)\times B_1(0)$, is attained at an eigenvector of norm $\sqrt{2}$ of the minimal eigenvalue of $K$.
We now show that if $(a_1,a_2)$ is an eigenvector of $K$ with non zero eigenvalue, then $|a_1|=|a_2|$. Therefore a negative minimum in (\ref{eqmin}) is also attained at an eigenvector of $K$ with norm $\sqrt{2}$ with minimal eigenvalue.
Let $(a_1,a_2)$ be an eigenvector of $K$ with $\lambda$ as an eigenvalue. Then it satisfies
\begin{equation}\label{eqsystem}
\left\{\begin{array}{ll}
S^*a_1+{}^tSa_2=\lambda a_1,\\
S^*a_2+Sa_1=\lambda a_2.
\end{array}\right.
\end{equation}
Multiply the first equation in (\ref{eqsystem}) by $a_2$ and the second by $a_1$. We obtain
\begin{equation}\label{eqsystem1}\left\{\begin{array}{ll}
S^*a_1\cdot a_2+Sa_2\cdot a_2=\lambda a_1\cdot a_2,\\
S^*a_2\cdot a_1+Sa_1\cdot a_1=\lambda a_1\cdot a_2,
\end{array}\right.\end{equation}
and then
\begin{equation}\label{eqss}
Sa_1\cdot a_1=Sa_2\cdot a_2.\end{equation}
Now restart from (\ref{eqsystem}) and multiply the first equation by $a_1$ and the second by $a_2$. We obtain
\begin{equation}\label{eqsystem2}\left\{\begin{array}{ll}
S^*a_1\cdot a_1+{}^tSa_2\cdot a_1=\lambda |a_1|^2,\\
S^*a_2\cdot a_2+Sa_1\cdot a_2=\lambda |a_2|^2,
\end{array}\right.\end{equation}
and therefore by (\ref{eqss})
$$\lambda(|a_1|^2-|a_2|^2)=0,$$
which gives us the conclusion.
\end{proof}
Putting things together we have the corresponding statement of Theorem \ref{thm1} for symmetric systems whose proof is now straightforward.
\begin{thm}\label{thm2}
Let $\sigma:\R^n\to \R^{n\times m}$ be of class $C^2$ and let $u:\R^n\to\R$ be a function of class $C^3$. Let $\bar x\in\R^n$ be a point such that $\nabla u(\bar x)\neq0$, $\nabla u\;\sigma(\bar x)=0$ and suppose that $K$ has a negative eigenvalue.
Then the target $\{x:u(x)\leq u(\bar x)\}$ is STLA for the symmetric system (\ref{eqsyssym}) at $\bar x$ and the minimum time function $T$ satisfies in the neighborhood of $\bar x$ the estimate (\ref{eqestimate}).
The coordinates of the eigenvector of $K(\bar x)$ with the minimal eigenvalue provide the controls for two vector fields that allow system (\ref{eqsyssym}) to reach the target in finite time.
\end{thm}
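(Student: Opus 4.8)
The plan is to deduce Theorem \ref{thm2} directly from Theorem \ref{thm1}, translating the abstract hypothesis (\ref{eqpde}) into the spectral condition on $K(\bar x)$ by means of the linear algebra already assembled in Lemma \ref{lemtech}, Proposition \ref{prop3} and Proposition \ref{propmatrk}.

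First I would verify the tangency hypothesis of Theorem \ref{thm1}. Since here $f(x,a)=\sigma(x)a$ with $a\in A=B_1(0)$, the assumption $\nabla u\,\sigma(\bar x)=0$ gives $\nabla u(\bar x)\cdot f(\bar x,a)=(\nabla u\,\sigma(\bar x))\cdot a=0$ for every admissible $a$, so all available vector fields are tangent to the level set of $u$ at $\bar x$, exactly as required.

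Next I would recast the second order condition. By the Remark following (\ref{eqkdef}), namely (\ref{lemtech2}), for every pair $a_1,a_2$ the quadratic expression inside the maximum in (\ref{eqpde}) equals $-K(\bar x)(a_1,a_2)\cdot(a_1,a_2)=-h(a_1,a_2)$, with $h$ as in (\ref{eqmin}). Hence (\ref{eqpde}) is equivalent to $\max_{|a_1|,|a_2|\le1}(-h(a_1,a_2))>0$, i.e.\ to the minimum of $h$ over $B_1(0)\times B_1(0)$ being strictly negative. I would then invoke Proposition \ref{propmatrk}: since $K(\bar x)$ is symmetric and always admits $0$ as an eigenvalue, it is positive semidefinite exactly when it has no negative eigenvalue, in which case $h\ge0$ and the minimum is $0$; conversely, if $K(\bar x)$ has a negative eigenvalue $\lambda<0$, the associated eigenvector of norm $\sqrt2$ lies in $B_1(0)\times B_1(0)$ and realizes $h=2\lambda<0$. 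Thus the standing assumption that $K(\bar x)$ has a negative eigenvalue is precisely (\ref{eqpde}), and Theorem \ref{thm1} applies, yielding STLA of the target at $\bar x$ together with the estimate (\ref{eqestimate}).

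Finally I would justify the constructive claim. By Proposition \ref{propmatrk} the negative minimum of $h$ is attained at an eigenvector $v=(a_1,a_2)$ of $K(\bar x)$ for the minimal eigenvalue, with $|a_1|=|a_2|=1$, so both $a_1$ and $a_2$ are admissible controls in $B_1(0)$; these are exactly the controls satisfying (\ref{eqpdepoint}) in the proof of Theorem \ref{thm1}, and the single-switch trajectory (\ref{eqswitch}) built from $f=\sigma a_1$, $g=\sigma a_2$ drives the system to the target. I expect the only delicate point to be the equivalence between the maximum over the box $B_1(0)\times B_1(0)$ occurring in (\ref{eqpde}) and the spectral condition on $K(\bar x)$, since the quadratic form $h$ is naturally extremized over the larger ball $B_{\sqrt2}(0)$; Proposition \ref{propmatrk} is precisely what reconciles the two, by showing that any negative minimizer is an eigenvector of norm $\sqrt2$ lying in the box, after which the remainder of the argument is routine.
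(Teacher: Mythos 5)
Your proposal is correct and follows essentially the same route as the paper, which presents Theorem \ref{thm2} as a direct consequence of ``putting together'' Theorem \ref{thm1}, Lemma \ref{lemtech} (via the identity (\ref{lemtech2})) and Proposition \ref{propmatrk}, exactly as you do. The one point you flag as delicate --- that a negative eigenvalue of $K(\bar x)$ yields a minimizer of $h$ inside the box $B_1(0)\times B_1(0)$ because eigenvectors with nonzero eigenvalue satisfy $|a_1|=|a_2|$ --- is precisely the content of Proposition \ref{propmatrk}, so nothing is missing.
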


In the next result we characterise when the minimum in (\ref{eqmin}) is negative by properties of $S$.
\begin{thm}\label{thm3}
Let $\sigma:\R^n\to\R^{n\times m}$ be of class $C^2$ and let $u:\R^n\to\R$ be a function of class $C^3$. Let $\bar x\in\R^n$ be a point such that $\nabla u\;\sigma(\bar x)=0$. 
Then $K({\bar x})$ is positive semidefinite if and only if $S({\bar x})$ is symmetric and positive semidefinite.
In particular if $S({\bar x})$ is not symmetric and positive semidefinite, then $K({\bar x})$ has a negative eigenvalue and the system (\ref{eqsyssym}) is STLA at $\bar x$.
\end{thm}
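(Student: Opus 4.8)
The plan is to reduce everything to the quadratic form of $K(\bar x)$ and to choose variables so that the symmetric and skew-symmetric parts of $S(\bar x)$ decouple. I suppress the dependence on $\bar x$ throughout. Starting from the identity \eqref{lemtech2}, the quadratic form of $K$ is
$$K\begin{pmatrix}a_1\\a_2\end{pmatrix}\cdot\begin{pmatrix}a_1\\a_2\end{pmatrix}=S^*(a_1+a_2)\cdot(a_1+a_2)+2S^e a_1\cdot a_2,\qquad a_1,a_2\in\R^m.$$
First I would introduce the bijective change of variables $s=a_1+a_2$, $d=a_1-a_2$; since $a_1,a_2$ range over all of $\R^m$, the pair $(s,d)$ ranges over all of $\R^m\times\R^m$ as well. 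Using only the skew-symmetry of $S^e$ (so that $S^e v\cdot v=0$ and $S^e d\cdot s=-S^e s\cdot d$), a short computation yields $S^e a_1\cdot a_2=-\tfrac12 S^e s\cdot d$, and therefore the quadratic form becomes
$$Q(s,d):=S^* s\cdot s-S^e s\cdot d.$$

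The forward implication is then essentially forced. Suppose $K$ is positive semidefinite, i.e.\ $Q(s,d)\ge0$ for all $s,d$. For each fixed $s$ the map $d\mapsto Q(s,d)$ is affine, with linear part $-S^e s\cdot d$; if $S^e s\neq0$ for some $s$, taking $d=\tau\,S^e s$ with $\tau\to+\infty$ drives $Q\to-\infty$, contradicting positivity. Hence $S^e s=0$ for every $s$, so $S^e=0$ and $S=S^*$ is symmetric. Setting $d=0$ (equivalently $a_1=a_2$) then gives $S s\cdot s=Q(s,0)\ge0$ for all $s$, so $S$ is positive semidefinite.

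Conversely, if $S$ is symmetric and positive semidefinite, then $S^e=0$ and $Q(s,d)=S s\cdot s\ge0$, whence $K$ is positive semidefinite; this is the trivial direction. For the final assertion, if $S$ fails to be symmetric and positive semidefinite, then by the equivalence just established $K$ is not positive semidefinite, and being symmetric it must possess a negative eigenvalue. Since $\nabla u(\bar x)\neq0$ and $\nabla u\,\sigma(\bar x)=0$ hold under the standing assumptions of this section, Theorem \ref{thm2} then delivers STLA at $\bar x$. I expect the only point needing care to be the bookkeeping in the variable change producing $Q(s,d)$—in particular the cancellations $S^e s\cdot s=0$ and $S^e d\cdot s=-S^e s\cdot d$—after which both implications follow from the elementary fact that a form which is affine and non-constant in one block of variables cannot be bounded below.
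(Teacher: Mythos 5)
Your proof is correct, and the key step is handled by a genuinely different argument from the paper's. For the nontrivial direction ($K$ positive semidefinite $\Rightarrow$ $S$ symmetric and positive semidefinite) the paper argues by contraposition and constructively: assuming $S$ is not symmetric, it takes a unit eigenvector $a_1$ of ${}^tSS$ with eigenvalue $\lambda^2>0$, sets $a_2=-Sa_1/\lambda$, computes $K(a_1,a_2)\cdot(a_1,a_2)=-2\lambda(1+a_1\cdot a_2)$, and rules out the degenerate case $a_2=-a_1$ by showing it would force $S$ to be orthogonally diagonalizable, hence symmetric. Your route instead substitutes $s=a_1+a_2$, $d=a_1-a_2$ to decouple the form as $Q(s,d)=S^*s\cdot s-S^e s\cdot d$ (your cancellations $S^e s\cdot s=0$ and $S^e d\cdot s=-S^e s\cdot d$ check out), and then kills $S^e$ by the observation that a form affine and nonconstant in $d$ is unbounded below. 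This is shorter and more transparent, and it is legitimate to work on all of $\R^m\times\R^m$ since positive semidefiniteness of the matrix $K(\bar x)$ is a statement about the full quadratic form (homogeneity makes restriction to a ball irrelevant). What you lose relative to the paper is constructiveness: your witnesses $d=\tau S^e s$ with $\tau$ large do not lie in $B_1(0)\times B_1(0)$, whereas the paper's unit-norm pair $(a_1,a_2)$ is exactly what gets reused later (in the proof of Theorem \ref{thmnas}, case 3, and in the ``eigenvector gives the controls'' interpretation of Theorem \ref{thm2}), so the paper's longer argument is doing extra work the theorem statement itself does not require. Your appeal to the section's standing assumption $\nabla u(\bar x)\neq0$ for the final STLA claim via Theorem \ref{thm2} is also correct.
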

\begin{proof} By (\ref{lemtech2}), it is clear that if $S({\bar x})$ is symmetric and positive semidefinite, then $K({\bar x})$ is positive semidefinite, as its skew symmetric part vanishes. We now prove the converse. Assume $K(\bar x)$ is positive semidefinite.

1. We suppose first that $S(\bar x)$ is symmetric. In the rest of the proof we will drop the dependence of the matrices on $\bar x$. Then again by (\ref{lemtech2})
$$K\left(\begin{array}{cc}a_1\\a_2\end{array}\right)\cdot \left(\begin{array}{cc}a_1\\a_2\end{array}\right)=S(a_1+a_2)\cdot (a_1+a_2).$$
Therefore if $K$ is positive semidefinite and we choose $a_1=a_2\in B_1(0)$, we get that $0\leq 4Sa_1\cdot a_1$, for all $a_1\in B_1(0)$, and $S$ is also positive semidefinite.

2. We suppose now that $S$ is not symmetric and show that $K$ must have a negative minimum on $B_1(0)\times B_1(0)$. In particular $S$ is not the null matrix. Consider the positive semidefinite matrix ${}^tSS$, it will have at least one positive eigenvalue $\lambda^2$ with corresponding unit eigenvector $a_1$. Thus 
$${}^tSSa_1=\lambda^2 a_1$$ 
and then
$$|Sa_1|^2=Sa_1\cdot Sa_1={}^tSSa_1\cdot a_1=\lambda^2 a_1\cdot a_1=\lambda^2,$$
so that $\lambda=|Sa_1|>0$. Just notice that if $\bar a$ is eigenvector of ${}^tSS$ with null eigenvalue, then the same argument shows that $S\bar a=0$.
If $a_1$ is therefore a unit eigenvector with $\lambda^2>0$ as an eigenvalue, let 
$$a_2=-\frac{S({\bar x})a_1}\lambda,$$
so that $|a_2|=1$. Now we obtain
$${}^tSa_2=-\lambda a_1,\quad Sa_1\cdot a_2=-\lambda,\quad
Sa_1\cdot a_1=-\lambda a_1\cdot a_2=Sa_2\cdot a_2.
$$
Thus we conclude that
$$K
\left(\begin{array}{cc}a_1\\ a_2\end{array}\right)\cdot \left(\begin{array}{cc}a_1\\ a_2\end{array}\right)=-2\lambda(1+a_1\cdot a_2).$$
We reach our conclusion that the left hand side is negative provided $a_1\neq -a_2$. 
Let us analyse this critical case. By definition it then follows
$$Sa_1=\lambda a_1,\quad {}^tSa_1=\lambda a_1.
$$
Therefore if this critical case happens for all eigenvectors of ${}^tSS$ with positive eigenvalues, and we consider an orthonormal basis of eigenvectors of ${}^tSS$, this is also a family of eigenvectors for $S$ which can then be diagonalised by an orthogonal matrix and is thus symmetric, which was supposed not to be the case.

The last conclusion now follows from Theorem \ref{thm2}.
\end{proof}

Finally the next statement shows a necessary and sufficient condition in order to have a second order STLA for a symmetric system at a point and (\ref{eqsecond}) satisfied.
\begin{prop} Consider the symmetric system (\ref{eqsyssym}) with $\sigma\in C^2(\R^n;\R^{n\times m})$.
Suppose that the target is defined as $\mathcal T=\{x:u(x)\leq u(\bar x)\}$ in the neighborhood of a point $\bar x\in \R^n$, where $u\in C^3(\R^n)$, $\nabla u(\bar x)\neq0$, $\nabla u(\bar x)\; \sigma(\bar x)=0$.  
The estimate \eqref{eqsecondb} holds
for all $x$ in a neighborhood of $\bar x$ if and only if the symmetric matrix $K(\bar x)$ has a negative eigenvalue or equivalently (\ref{eqpde}) holds.
Moreover in this case (\ref{eqsecond}) holds as well.
\end{prop}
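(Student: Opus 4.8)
The plan is to prove the biconditional by splitting it into a sufficiency direction and a necessity direction, after first identifying the two reformulations of the hypothesis. For the latter I would observe that, by the Remark following (\ref{eqkdef}) — identity (\ref{lemtech2}) together with Lemma \ref{lemtech} — writing $v=(a_1,a_2)$, the quadratic form $-K(\bar x)v\cdot v$ is precisely the expression maximized in (\ref{eqpde}) over $(a_1,a_2)\in B_1(0)\times B_1(0)$. Hence (\ref{eqpde}) holds if and only if $K(\bar x)v\cdot v<0$ for some such $v$, which by Proposition \ref{propmatrk} is equivalent to the symmetric matrix $K(\bar x)$ admitting a negative eigenvalue. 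This reduces the statement to the equivalence: (\ref{eqsecondb}) holds in a neighborhood of $\bar x$ if and only if $K(\bar x)$ has a negative eigenvalue.

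For sufficiency I would assume $K(\bar x)$ has a negative eigenvalue, so that (\ref{eqpde}) holds, and let $(a_1,a_2)$ be the eigenvector coordinates supplied by Proposition \ref{propmatrk}, with $|a_1|=|a_2|=1$. Note $\nabla u(\bar x)\cdot(f(\bar x,a_1)+f(\bar x,a_2))=\nabla u(\bar x)\,\sigma(\bar x)(a_1+a_2)=0$, and by continuity the strict inequality extends to (\ref{eqcor}) on a ball $B_r(\bar x)$. Since the system is symmetric with $A=B_1(0)$ symmetric about the origin, $-a_1,-a_2\in B_1(0)$ and $f(\cdot,-a_i)\equiv-f(\cdot,a_i)$, so by Remark \ref{remextendedsym} the orientation condition (\ref{eqdest1}) can always be arranged; Corollary \ref{corsecondest} then yields the stronger estimate (\ref{eqfinalest}), i.e.\ (\ref{eqsecond}). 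Because $\bar x\in\partial\mathcal T$ forces $d(x)\leq|x-\bar x|$, estimate (\ref{eqsecond}) immediately implies (\ref{eqsecondb}); this simultaneously settles the ``moreover'' clause.

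The necessity direction is the crux and relies on Proposition \ref{nec-cond-2}. First I would verify its hypotheses: for $f(x,a)=\sigma(x)a$ with $\sigma\in C^2$ the field is $C^2$ and $f(x,\cdot)$ is linear, hence convex; moreover $\nabla u(\bar x)\cdot f(\bar x,a)=\nabla u(\bar x)\,\sigma(\bar x)a=0$ for all $a$, and (\ref{eqsecondb}) is exactly (\ref{eqnewsestimate}) with the admissible exponent $s=1/2\in\,]1/3,1/2]$. Proposition \ref{nec-cond-2} then forces one of the alternatives (\ref{trans-br1}), (\ref{trans-br2}), which I would translate through Lemma \ref{lemtech}: alternative (\ref{trans-br1}) gives $2S^e(\bar x)a_1\cdot a_2<0$ for some $a_1,a_2$, so $S^e(\bar x)\neq0$ and $S(\bar x)$ is not symmetric; alternative (\ref{trans-br2}) reads, with $a_1=a_2=\bar a$ in Lemma \ref{lemtech}(i), as $S(\bar x)\bar a\cdot\bar a<0$, so $S(\bar x)$ is not positive semidefinite. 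In either case $S(\bar x)$ fails to be both symmetric and positive semidefinite, whence by Theorem \ref{thm3} the matrix $K(\bar x)$ has a negative eigenvalue, closing the loop.

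The main obstacle is this necessity step, and within it the bookkeeping that reduces the two a priori unrelated alternatives of Proposition \ref{nec-cond-2} to a single algebraic dichotomy on $S(\bar x)$ — non-symmetry versus non-positivity — so that Theorem \ref{thm3} applies uniformly. The sufficiency direction, by contrast, is essentially a repackaging of Theorem \ref{thm2}, Corollary \ref{corsecondest} and the symmetry trick of Remark \ref{remextendedsym}, and needs no new idea beyond the elementary observation $d(x)\le|x-\bar x|$ to pass from (\ref{eqsecond}) to (\ref{eqsecondb}).
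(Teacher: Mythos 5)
Your proposal is correct and follows essentially the same route as the paper: necessity via Proposition \ref{nec-cond-2} translated through Lemma \ref{lemtech} into the dichotomy ``$S(\bar x)$ not symmetric or not positive semidefinite,'' then Theorem \ref{thm3} to get a negative eigenvalue of $K(\bar x)$; sufficiency via Theorem \ref{thm2} together with Corollary \ref{corsecondest} and the symmetry trick of Remark \ref{remextendedsym}. Your added bookkeeping (checking the hypotheses of Proposition \ref{nec-cond-2}, the equivalence with (\ref{eqpde}) via Proposition \ref{propmatrk}, and the observation $d(x)\le|x-\bar x|$) only makes explicit what the paper leaves implicit.
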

\begin{proof} 
The necessary condition of Proposition \ref{nec-cond-2} shows that if (\ref{eqsecondb}) holds, two conclusions are possible. Either (\ref{trans-br1}) holds, in which case the matrix $S(\bar x)$ is not symmetric having by Lemma \ref{lemtech}(ii) a nontrivial skew symmetric part. Otherwise
$S(\bar x)$ is symmetric and (\ref{trans-br2}) holds, in which case $S(\bar x)$ will not be positive semidefinite by Lemma \ref{lemtech}(i). Therefore by Theorem \ref{thm3} $K(\bar x)$ is not positive semidefinite and must have a negative eigenvalue and thus (\ref{eqpde}) holds.

The fact that if $K(\bar x)$ has a negative eigenvalue, then the system is STLA at $\bar x$ is a direct consequence of Theorem \ref{thm2}. The fact that for a symmetric system (\ref{eqpde}) implies the estimate (\ref{eqsecond}) is a consequence of Corollary \ref{corsecondest} and Remark \ref{remextendedsym}.
\end{proof}

\section{Nonlinear affine systems}

The system is said to be nonlinear affine when the controlled vector field has the form $f(x,a)=\sigma_0(x)+\sigma(x)a$, where $\sigma\in C^1(\R^n;\R^{n\times m})$, $\sigma=(\sigma_1,\dots,\sigma_m)$, $a\in B_1(0)$, and $\sigma_0\in C^1(\R^n;\R^n)$.
Therefore (\ref{eqsys}) takes the form
\begin{equation}\label{eqsysnotsym}
\left\{\begin{array}{ll}\dot x_t=\sigma_0(x)+\sigma(x_t)a_t,\\
x_0\in {\mathbb R}^n,
\end{array}\right.
\end{equation}
where $a.:[0,+\infty)\to B_1(0)$ is piecewise constant.
We approach the discussion of our sufficient conditions for system (\ref{eqsysnotsym}) to reach the target $\mathcal T=\{x:u(x)\leq u(\bar x)\}$ in an algebraic way, as we did in the previous section for a symmetric one. We start extending the matrix valued function $\sigma$ by adding the vector field $\sigma_o$ as the first column. Then we construct the corresponding matrix $\tilde S$ of type $n\times(m+1)$ as in Lemma \ref{lemtech} so that it relates to the matrix $S$ of $\sigma$ as follows
$$\tilde S(x)=\left(\begin{array}{cc}
\alpha\quad&^t\beta\\
\gamma&S
\end{array}\right),$$
where $\alpha=H_{\sigma_0,\sigma_0}u$, $\beta=(H_{\sigma_j,\sigma_0}u)_{j=1,\dots,m}$, $\gamma=(H_{\sigma_0,\sigma_j}u)_{j=1,\dots,m}$. 
We also note for later use that
\begin{equation}\label{eqbetagamma}
\gamma-\beta=(H_{\sigma_0,\sigma_j}u-H_{\sigma_j,\sigma_0}u)_{j=1,\dots,m}=
([\sigma_0,\sigma_j]\cdot \nabla u)_{j=1,\dots,m}.
\end{equation}
Finally we introduce the corresponding matrix $\tilde K$ as in (\ref{eqkdef}). The analogue of Proposition \ref{prop1} now becomes as in the following statement.
\begin{prop}\label{propas}
Let $t>0$ and $\tilde\sigma:\R^n\to\R^{n\times (m+1)}$ be of class $C^1$. Let $f=\sigma_0+\sigma a_1$, $g=\sigma_0+\sigma a_2$, for $a_1,a_2\in B_1(0)\subset\R^m$, and $u:\R^n\to\R$ be a function of class $C^2$. The trajectory (\ref{eqswitch}) satisfies
\begin{equation}\label{eqestas}\begin{array}{ll}
u(x_{2t})-u(x_0)=\nabla u\cdot(2\sigma_0(x_0)+\sigma(x_0)(a_1+a_2))t+\tilde K(x_0)\left(\begin{array}{c}1\\a_1\\1\\a_2\end{array}\right)\cdot \left(\begin{array}{c}1\\a_1\\1\\a_2\end{array}\right)\frac{t^2}2+o(t^2),
\end{array}\end{equation}
as $t\to0$.
\end{prop}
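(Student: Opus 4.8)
The plan is to reduce the affine case to the symmetric framework of the previous section, by absorbing the drift $\sigma_0$ into the control matrix as an extra column whose control component is frozen at the value $1$. I would set $\tilde\sigma=(\sigma_0\mid\sigma)\in\R^{n\times(m+1)}$ and introduce the augmented controls
$$\tilde a_1=\begin{pmatrix}1\\a_1\end{pmatrix},\qquad\tilde a_2=\begin{pmatrix}1\\a_2\end{pmatrix}\in\R^{m+1},$$
so that the two vector fields appearing in the switching trajectory become $f=\sigma_0+\sigma a_1=\tilde\sigma\,\tilde a_1$ and $g=\sigma_0+\sigma a_2=\tilde\sigma\,\tilde a_2$. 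Since $\sigma_0$ and $\sigma$ are $C^1$, the matrix $\tilde\sigma$ is $C^1$ and $f,g$ are admissible $C^1$ vector fields, so Proposition \ref{prop2} applies to them without modification.

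First I would read off the first order term. By the first line of (\ref{eqest}) the coefficient of $t$ is $\nabla u\cdot(f+g)(x_0)$; since $f+g=2\sigma_0+\sigma(a_1+a_2)$, this equals exactly $\nabla u\cdot(2\sigma_0(x_0)+\sigma(x_0)(a_1+a_2))$, which is the first term of (\ref{eqestas}). For the second order term I would use the second equality in (\ref{eqest}), writing the coefficient of $t^2/2$ as $H_{f,f}u(x_0)+H_{g,g}u(x_0)+2H_{f,g}u(x_0)$, and then evaluate these three Hamiltonians through $\tilde S=D(\nabla u\,\tilde\sigma)\tilde\sigma$, the matrix displayed before the statement. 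Lemma \ref{lemtech}(i), applied with $\tilde\sigma$ in place of $\sigma$, gives $H_{f,g}u=\tilde S\,\tilde a_1\cdot\tilde a_2$ and likewise $H_{f,f}u=\tilde S\,\tilde a_1\cdot\tilde a_1$, $H_{g,g}u=\tilde S\,\tilde a_2\cdot\tilde a_2$, so that the second order coefficient equals $\tilde S\,\tilde a_1\cdot\tilde a_1+\tilde S\,\tilde a_2\cdot\tilde a_2+2\tilde S\,\tilde a_1\cdot\tilde a_2$. Finally the identity (\ref{lemtech2}), written for the matrix $\tilde K$ built from $\tilde S$ as in (\ref{eqkdef}), recognises this sum as $\tilde K(x_0)\begin{pmatrix}1\\a_1\\1\\a_2\end{pmatrix}\cdot\begin{pmatrix}1\\a_1\\1\\a_2\end{pmatrix}$, which completes (\ref{eqestas}).

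The point that requires care, rather than a real obstacle, is justifying that the identities of Lemma \ref{lemtech}(i) and of (\ref{lemtech2}) carry over unchanged to the augmented setting. They were stated for controls in $B_1(0)$, whereas the augmented controls $\tilde a_i$ have first component equal to $1$ and therefore lie outside the unit ball of $\R^{m+1}$. This causes no difficulty, since both identities are purely bilinear in the control vectors and hold for arbitrary elements of $\R^{m+1}$; the norm constraint only enters later, when one optimises the resulting quadratic form over an admissible control set, and plays no role in the expansion itself. With this observation the computation closes as above.
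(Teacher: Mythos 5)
Your proposal is correct and follows exactly the route the paper intends: the paper states Proposition \ref{propas} without a written proof, presenting it as the analogue of the symmetric case obtained by applying Proposition \ref{prop2} to $f=\tilde\sigma\,{}^t(1,a_1)$, $g=\tilde\sigma\,{}^t(1,a_2)$ and then invoking Lemma \ref{lemtech}(i) and the identity (\ref{lemtech2}) for the augmented matrices $\tilde S,\tilde K$, which is precisely your computation (and matches the paper's subsequent display (\ref{eqfinalas})). Your remark that the bilinear identities hold for arbitrary vectors, so the augmented controls lying outside the unit ball is harmless, correctly disposes of the only point needing care.
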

It is clear that additional difficulties come from the fact that the second order term in (\ref{eqestas}) is not a nice quadratic form as before. In order to discuss its sign, we have to express it an a more readable way. First observe that, for $a_1,a_2\in B_1(0)$,
$$\tilde S(x_0)\left(\begin{array}{c}1\\a_1\end{array}\right)\cdot \left(\begin{array}{c}1\\a_2\end{array}\right)=\alpha+\beta\cdot a_1+\gamma\cdot a_2+S(x_0)a_1\cdot a_2.$$
Next we easily get another expression of the coefficient of the second order term in (\ref{eqestas}) as
\begin{equation}\label{eqfinalas}\begin{array}{ll}
k(a_1,a_2):=\tilde K(x_0)\left(\begin{array}{c}1\\a_1\\1\\a_2\end{array}\right)\cdot \left(\begin{array}{c}1\\a_1\\1\\a_2\end{array}\right)\\
\quad=\tilde S(x_0)\left(\begin{array}{c}1\\a_1\end{array}\right)\cdot \left(\begin{array}{c}1\\a_1\end{array}\right)+\tilde S(x_0)\left(\begin{array}{c}1\\a_2\end{array}\right)\cdot \left(\begin{array}{c}1\\a_2\end{array}\right)
+2\tilde S(x_0)\left(\begin{array}{c}1\\a_1\end{array}\right)\cdot \left(\begin{array}{c}1\\a_2\end{array}\right)
\\=4\alpha+(3\beta+\gamma)\cdot a_1+(\beta+3\gamma)\cdot a_2+
 K(x_0)\left(\begin{array}{c}a_1\\a_2\end{array}\right)\cdot \left(\begin{array}{c}a_1\\a_2\end{array}\right).
 \end{array}\end{equation}
All the trouble is created by $\alpha=\nabla(\nabla u\cdot \sigma_0)\cdot\sigma_0(x_0)$ and the other terms need to compensate for it. We obtain the following result which we write initially for $\alpha\leq0$ for simplicity, see Remark \ref{remalpha} for the general case. One further case will be considered in Corollary \ref{coronefield}.
\begin{thm}\label{thmnas}
Let $\sigma_0\in C^2(\R^n;\R^n)$, and $\sigma:\R^n\to \R^{n\times m}$ be of class $C^2$ and let $u:\R^n\to\R$ be a function of class $C^3$. Let $\bar x\in\R^n$ be a point such that $\nabla u(\bar x)\neq0$ and $\nabla u\;\tilde\sigma(\bar x)=0$.
Suppose moreover that $\nabla(\nabla\cdot \sigma_0)\cdot\sigma_0(\bar x)\leq0$ and either there is $j\in\{1,\dots,m\}$ such that $[\sigma_0,\sigma_j](x)\cdot\nabla u(\bar x)\neq 0$ or $K$ is not positive semidefinite.
Then the target $\{x:u(x)\leq u(\bar x)\}$ is STLA for the system (\ref{eqsys}) at $\bar x$ and the minimum time function $T$ satisfies in the neighborhood of $\bar x$ the estimate (\ref{eqestimate}).
\end{thm}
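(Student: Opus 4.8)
The plan is to verify that the hypotheses of Theorem \ref{thm1} hold for the affine field $f(\cdot,a)=\sigma_0+\sigma a$ and then invoke that theorem. First I note that the tangency assumption $\nabla u\cdot\tilde\sigma(\bar x)=0$ means $\nabla u(\bar x)\cdot\sigma_0(\bar x)=0$ and $\nabla u(\bar x)\cdot\sigma_j(\bar x)=0$ for every $j$, so that $\nabla u(\bar x)\cdot f(\bar x,a)=0$ for all $a\in B_1(0)$; in particular the first order term $\nabla u\cdot(2\sigma_0+\sigma(a_1+a_2))$ in the expansion \eqref{eqestas} of Proposition \ref{propas} vanishes at $\bar x$ for every pair $(a_1,a_2)$. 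Consequently the whole problem reduces to exhibiting a single pair $a_1,a_2\in B_1(0)$ for which the second order coefficient $k(a_1,a_2)$ of \eqref{eqfinalas} is strictly negative, i.e.\ that the affine analogue of the left-hand side of \eqref{eqpde} is positive. Once this is secured, STLA at $\bar x$ and the estimate \eqref{eqestimate} follow from the argument proving Theorem \ref{thm1}.

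The heart of the matter is thus the sign of $\min k$ over $B_1(0)\times B_1(0)$, and I would split according to the two alternatives in the hypothesis. Suppose first that $[\sigma_0,\sigma_j](\bar x)\cdot\nabla u(\bar x)\neq0$ for some $j$. By \eqref{eqbetagamma} this says exactly that $\gamma-\beta\neq0$, and I would test the antisymmetric pair $a_1=a$, $a_2=-a$ with $a=(\gamma-\beta)/|\gamma-\beta|\in B_1(0)$. Substituting into \eqref{eqfinalas}, the quadratic part $K(\bar x)(a,-a)\cdot(a,-a)$ vanishes by antisymmetry, the linear terms collapse to $-2(\gamma-\beta)\cdot a=-2|\gamma-\beta|$, and one is left with $k(a,-a)=4\alpha-2|\gamma-\beta|$, which is strictly negative because $\alpha\leq0$.

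In the complementary case I assume $K(\bar x)$ is not positive semidefinite. Then the quadratic form $h$ of \eqref{eqmin} attains a negative minimum and, by Proposition \ref{propmatrk}, this is reached at an eigenvector $(c_1,c_2)$ with $|c_1|=|c_2|=1$. Since $h$ is invariant under $(c_1,c_2)\mapsto(-c_1,-c_2)$ while the linear part $(3\beta+\gamma)\cdot c_1+(\beta+3\gamma)\cdot c_2$ changes sign, I may fix the sign of the eigenvector so that this linear part is $\leq0$. Evaluating \eqref{eqfinalas} then gives $k(c_1,c_2)=4\alpha+\big[(3\beta+\gamma)\cdot c_1+(\beta+3\gamma)\cdot c_2\big]+h(c_1,c_2)$, where the bracketed term is $\leq0$ and $4\alpha\leq0$, so $k(c_1,c_2)\leq h(c_1,c_2)<0$. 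In both cases the affine version of \eqref{eqpde} holds, and Theorem \ref{thm1} applies.

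I expect the only genuine obstacle to be the presence of the drift, which turns $k$ from the homogeneous quadratic form of the symmetric case (Proposition \ref{prop3}) into an inhomogeneous expression carrying the constant $4\alpha$ and the linear terms $(3\beta+\gamma)\cdot a_1+(\beta+3\gamma)\cdot a_2$. The sign hypothesis $\alpha=H_{\sigma_0,\sigma_0}u(\bar x)\leq0$ is precisely what neutralizes the self-interaction of the drift, and the two deliberately different control choices—antisymmetric to annihilate the quadratic part and exploit the bracket, versus a sign-adjusted eigenvector of $K(\bar x)$ to absorb the linear part—are what make the remaining negative contribution dominate. The subsequent passage from the strict inequality at $\bar x$ to a full neighborhood, including the control of the non-vanishing first order term by a bound of the form $L_1|x-\bar x|$ as in \eqref{eq2nd}, is routine and identical to the proof of Theorem \ref{thm1}.
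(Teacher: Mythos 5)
Your argument is correct, and your first case (the antisymmetric choice $a_1=(\gamma-\beta)/|\gamma-\beta|$, $a_2=-a_1$, killing the quadratic part and yielding $k=4\alpha-2|\gamma-\beta|<0$) coincides with case 1 of the paper's proof. Where you genuinely diverge is in the alternative ``$K(\bar x)$ not positive semidefinite.'' The paper first passes through Theorem \ref{thm3} to translate that hypothesis into properties of $S(\bar x)$, and then splits again: if $S^*(\bar x)$ has a negative eigenvalue it tests $a_2=a_1$ along that eigenvector; if $S(\bar x)$ is not symmetric it reruns the ${}^tSS$-eigenvector construction from Theorem \ref{thm3}, chooses $a_2=-Sa_1/\lambda$, and must separately rule out the degenerate possibility $a_1=-a_2$ for every relevant eigenvector. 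You instead work directly with the minimal eigenvector $(c_1,c_2)$ of $K(\bar x)$ furnished by Proposition \ref{propmatrk} (with $|c_1|=|c_2|=1$ and $h(c_1,c_2)=2\lambda<0$), and exploit the parity of $h$ versus the oddness of the linear term $(3\beta+\gamma)\cdot c_1+(\beta+3\gamma)\cdot c_2$ to fix a sign making that term nonpositive, so that $k(c_1,c_2)\le 4\alpha+h(c_1,c_2)<0$. This is shorter, treats the two sub-cases of the paper uniformly, and entirely avoids the delicate $a_1\neq -a_2$ discussion; what you give up is the extra structural information the paper extracts (which mechanism --- transversal bracket via $S^e$, or curvature via $S^*$ --- produces the decrease, and controls expressed through $S$ rather than through an eigenvector of the $2m\times 2m$ matrix $K$). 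Both routes feed the same strict pointwise inequality into Theorem \ref{thm1}, and your closing remark that the neighborhood estimate then follows verbatim is exactly how the paper concludes.
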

\begin{proof}
In view of (\ref{eqestas}) and the assumption, we have to analyse the sign of (\ref{eqfinalas}) at $\bar x=x_0$ in order to apply Theorem \ref{thm1}. We do it in several cases. We always assume $\alpha\leq0$.

If we choose $a_2=-a_1$ then by the definition of $K$,
\begin{equation}\label{eqkoper}
k(a_1,-a_1)=4\alpha+2(\beta-\gamma)\cdot a_1.
\end{equation}
\noindent
1. Suppose now that $[\sigma_0,\sigma_j](x)\cdot\nabla u(\bar x)\neq 0$ for some $j\in\{1,\dots,m\}$. Therefore $\gamma-\beta\neq0$. Let $a_1=(\gamma-\beta)/|\gamma-\beta|$, then \begin{equation}\label{eqcase2}
k(a_1,-a_1)=4\alpha-2|\gamma-\beta|=4\alpha -2|[\sigma_0,\sigma a_1]\cdot \nabla u(\bar x)|.
\end{equation}
Since this expression is negative, because $\alpha\leq0$, we can conclude.

In the next two cases we suppose that $K(\bar x)$ is not positive semidefinite.
We know by Theorem \ref{thm3} that then $S(\bar x)$ is either not symmetric or not positive semidefinite.

\noindent
2. Suppose first that $S^*(\bar x)$ has a negative eigenvalue.
Therefore there is $a_1\in B_1(0)$, $|a_1|=1$, $a_1$ eigenvector of $S^*(x_0)$ with corresponding eigenvalue $-\lambda<0$.
We now proceed by looking at the case $a_2=a_1$. We get
\begin{equation}\label{eqkoper1}
k(a_1,a_1)=4(\alpha+(\beta+\gamma)\cdot a_1+S(\bar x)a_1\cdot a_1).
\end{equation}
We can choose the direction of $a_1$ so that the mid term of the right hand side in (\ref{eqkoper1}) is nonpositive. We obtain
\begin{equation}\label{eqcase3}
k(a_1,a_1)\leq4(\alpha-\lambda),
\end{equation}
and we conclude, as $\alpha\leq0$.

\noindent
3. Suppose now that $S$ is not symmetric and in particular not null. Then we can find $i,j\in\{1,\dots,m\}$ such that $[\sigma_i,\sigma_j]\cdot\nabla u(x_0)\neq0$.
As in the proof of 
Theorem \ref{thm2}, 
 the positive semidefinite matrix $^tS(x_0)S(x_0)$ has a strictly positive eigenvalue $\lambda^2>0$ with corresponding unit eigenvector $a_1\in B_1(0)$. We will choose the direction of $a_1$ later. Then $\lambda=|S(x_0)a_1|>0$ and as before we choose $a_2=-S(x_0)a_1/\lambda$.
As in the proof of Theorem \ref{thm2} we obtain
\begin{equation}\label{eqnalast}
k(a_1,a_2)=4\alpha+((3\beta+\gamma)-\frac1\lambda\;^tS(x_0)(\beta+3\gamma))\cdot a_1-2\lambda(1+a_1\cdot a_2).
\end{equation}
We therefore choose the direction of $a_1$ so that the mid term in (\ref{eqnalast}) is not positive and get
\begin{equation}\label{eqcase4}
k(a_1,a_2)\leq 4\alpha-2\lambda(1+a_1\cdot a_2).
\end{equation}
We also know, by the proof of Theorem \ref{thm2}, that the second term in the right hand side of (\ref{eqcase4}) will be strictly negative for some choice of the eigenvector $a_1$, otherwise $S(\bar x)$ is symmetric. In our assumptions we can thus conclude.
\end{proof}
\begin{rem}\label{remalpha}
In the statement of Theorem \ref{thmnas} we only analysed the case $\alpha\leq0$. However as seen in the proof, if $\alpha>0$ this creates an obstacle to the local attainability of the target but other terms can compensate. In particular the conclusions of the theorem still hold if $4\alpha<2|\gamma-\beta|$ by (\ref{eqcase2}) in case 1; if $\alpha<\lambda0$ by (\ref{eqcase3}) in case 2; and finally if $2\alpha<\lambda(1+a_1\cdot a_2)$ by (\ref{eqcase4}) in case 3.
As in Remark \ref{remcontrollability}, it is enough to assume that there are $a_1,a_2\in B_1(0)$ such that $\nabla u(\bar x)\cdot(2\sigma_0(\bar x)+\sigma(\bar x)(a_1+a_2))=0$ instead of $\nabla u(\bar x)\tilde\sigma(\bar x)=0$, and that (\ref{eqpdepoint}) is satisfied in order to reach the same conclusions.
\end{rem}
One case is still missing in the previous statement, precisely the case of the single vector field allowing the system to reach the target. This is always contained in Theorem \ref{thm1} and we add it here for  immediate use.
\begin{cor}\label{coronefield}
Let $\sigma_0\in C^2(\R^n;\R^n)$, and $\sigma:\R^n\to \R^{n\times m}$ be of class $C^2$ and let $u:\R^n\to\R$ be a function of class $C^3$. Let $\bar x\in\R^n$ be a point such that $\nabla u(\bar x)\neq0$ and $\nabla u\;\tilde\sigma(\bar x)=0$.
Suppose that there is $a\in B_1(0)$ such that $\nabla(\nabla u\cdot(\sigma_0+\sigma a))\cdot(\sigma_0+\sigma(\bar x)a)<0$. Then
 the target $\{x:u(x)\leq u(\bar x)\}$ is STLA for the system (\ref{eqsys}) at $\bar x$ and the minimum time function $T$ satisfies in the neighborhood of $\bar x$ the estimate (\ref{eqestimate}).
\end{cor}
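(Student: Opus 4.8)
The plan is to recognise this statement as the diagonal case $a_1=a_2=a$ of Theorem~\ref{thm1}, in which the switching trajectory \eqref{eqswitch} degenerates to a single constant control and no switch occurs. Write $F:=\sigma_0+\sigma a=f(\cdot,a)$ for the vector field of the system associated with the fixed control $a\in B_1(0)$. The hypothesis $\nabla(\nabla u\cdot(\sigma_0+\sigma a))\cdot(\sigma_0+\sigma(\bar x)a)<0$ is exactly the second order Hamiltonian of Section~2 evaluated on $F$, namely $H_{F,F}u(\bar x)=\nabla(\nabla u\cdot F)(\bar x)\cdot F(\bar x)<0$. Moreover the tangency assumption $\nabla u\,\tilde\sigma(\bar x)=0$ forces $\nabla u(\bar x)\cdot f(\bar x,a)=0$ for every $a\in B_1(0)$, and in particular $\nabla u(\bar x)\cdot F(\bar x)=0$, so the standing hypothesis of Theorem~\ref{thm1} is met.

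First I would specialise \eqref{eqpde}--\eqref{eqpdepoint} to the pair $(a,a)$. With $f(\cdot,a_1)=f(\cdot,a_2)=F$ the Lie bracket $[F,F]$ vanishes identically, and the $f\equiv g$ case of Proposition~\ref{prop2} gives $u(x_{2t})-u(\bar x)=2\big(\nabla u\cdot F(\bar x)\,t+H_{F,F}u(\bar x)\,t^2\big)+o(t^2)$. Hence the second order coefficient measured by \eqref{eqpdepoint} is a positive multiple of $-H_{F,F}u(\bar x)$, which is strictly positive by assumption; consequently the maximum over $A\times A$ in \eqref{eqpde} is strictly positive and \eqref{eqpdepoint} holds with the pair $(a,a)$. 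Theorem~\ref{thm1} (or, granting only $\nabla u(\bar x)\cdot F(\bar x)=0$ in place of tangency for all controls, the slightly weaker variant noted in Remark~\ref{remcontrollability}) then yields that $\{x:u(x)\le u(\bar x)\}$ is STLA at $\bar x$ together with the estimate \eqref{eqestimate}.

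The only genuinely computational point is the collapse of the second order term at $(a,a)$ to the single field Hamiltonian $H_{F,F}u$, and this is immediate from Section~2 once one notes $[F,F]=0$ and uses the bilinearity of the second order Hamiltonian; so I expect no real obstacle here. Should one prefer to bypass Theorem~\ref{thm1} as a black box, an equally short self-contained route is to rerun its proof verbatim with the constant control trajectory: by the same single field formula of Proposition~\ref{prop2} one has $u(x_{2t})-u(\bar x)=2H_{F,F}u(\bar x)\,t^2+o(t^2)<0$ for small $t$, and propagating this through the continuous dependence on the initial datum and the local Lipschitz estimates used in the proof of Theorem~\ref{thm1} produces the same $1/2$-power bound \eqref{eqestimate} for the minimum time function near $\bar x$.
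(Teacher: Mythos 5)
Your proof is correct and takes essentially the same route as the paper, which offers no separate argument for Corollary \ref{coronefield} beyond the remark that it ``is always contained in Theorem \ref{thm1}'': you make this precise by checking that $\nabla u\,\tilde\sigma(\bar x)=0$ gives tangency of every $f(\bar x,a)=\sigma_0(\bar x)+\sigma(\bar x)a$, that the bracket term vanishes on the diagonal pair $(a,a)$, and that the hypothesis is exactly $H_{F,F}u(\bar x)<0$ for $F=\sigma_0+\sigma a$, so the single-field case of Proposition \ref{prop2} yields $u(x_{2t})-u(\bar x)=2H_{F,F}u(\bar x)t^2+o(t^2)$ and the proof of Theorem \ref{thm1} goes through verbatim with a constant control. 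The only caveat is that the expression in (\ref{eqpdepoint}), read literally at $(a,a)$, is $-D^2u\,F\cdot F-4DF\,F\cdot\nabla u(\bar x)$ rather than the multiple $-4H_{F,F}u(\bar x)$ you assert; this mismatch traces to a typo in (\ref{eqpde}) itself (its first term should be $\mathrm{Tr}(D^2u\,(f+g)\otimes(f+g))$ to agree with (\ref{eqest}) and with the quadratic form of $K$), so your fallback through Proposition \ref{prop2} is the cleanly correct justification and no genuine gap remains.
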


At this point we can revisit the necessary condition of Proposition \ref{nec-cond-2} for (\ref{eqnewsestimate}) in the case of nonlinear affine systems and show that it is also sufficient under a further condition on the drift.
\begin{prop}
Let $\sigma_0\in C^2(\R^n;\R^n)$, and $\sigma:\R^n\to \R^{n\times m}$ be of class $C^2$ and let $u:\R^n\to\R$ be a function of class $C^3$. Let $\bar x\in\R^n$ be a point such that $\nabla u(\bar x)\neq0$ and $\nabla u\;\tilde\sigma(\bar x)=0$. Suppose moreover that $\nabla(\nabla u\cdot\sigma_0)\cdot\sigma_0(\bar x)\leq0$. 
If the estimate (\ref{eqnewsestimate}) holds with $s\in(1/3,1/2]$, then either the sufficient condition in Theorem \ref{thmnas} or that in Corollary \ref{coronefield} must be satisfied.
\end{prop}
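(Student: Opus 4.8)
The plan is to read this statement as the exact converse of Theorem~\ref{thmnas} together with Corollary~\ref{coronefield}, and to prove it by feeding the estimate \eqref{eqnewsestimate} into the abstract necessary condition of Proposition~\ref{nec-cond-2}. First I would verify that the nonlinear affine system satisfies the hypotheses of that proposition. Here $u\in C^3$ and $\nabla u(\bar x)\neq0$ are assumed; the hypothesis $\nabla u\,\tilde\sigma(\bar x)=0$ unpacks into $\nabla u\cdot\sigma_0(\bar x)=0$ and $\nabla u\cdot\sigma_j(\bar x)=0$ for every $j$, so that $\nabla u(\bar x)\cdot f(\bar x,a)=\nabla u\cdot\sigma_0(\bar x)+\nabla u\cdot\sigma(\bar x)a=0$ for all $a\in B_1(0)$; moreover $f(x,\cdot)=\sigma_0(x)+\sigma(x)\,\cdot$ is affine, hence convex, in $a$, and $\sigma_0,\sigma\in C^2$ give $f\in C^2$. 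Since \eqref{eqnewsestimate} holds with $s\in(1/3,1/2]$, Proposition~\ref{nec-cond-2} then delivers one of the two alternatives \eqref{trans-br1} or \eqref{trans-br2}, and the whole proof reduces to matching each alternative with one of the two sufficient conditions.

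The alternative \eqref{trans-br2} yields $\bar a\in B_1(0)$ with $\nabla(\nabla u\cdot f(\cdot,\bar a))\cdot f(\bar x,\bar a)<0$; since $f(\cdot,\bar a)=\sigma_0+\sigma\bar a$ this is, verbatim, the hypothesis of Corollary~\ref{coronefield}, so that branch is immediate. For the alternative \eqref{trans-br1} I would expand the Lie bracket of the two affine fields by bilinearity. Writing $f(\cdot,a_1)=\sigma_0+\sigma a_1$, $f(\cdot,a_2)=\sigma_0+\sigma a_2$ and using $[\sigma_0,\sigma_0]=0$, the computation produces
\[
[f(\cdot,a_1),f(\cdot,a_2)]\cdot\nabla u(\bar x)=(\gamma-\beta)\cdot(a_2-a_1)+2S^e(\bar x)\,a_1\cdot a_2,
\]
where I invoke \eqref{eqbetagamma} to identify $(\gamma-\beta)_j=[\sigma_0,\sigma_j]\cdot\nabla u(\bar x)$ and Lemma~\ref{lemtech}(ii) to identify $[\sigma a_1,\sigma a_2]\cdot\nabla u(\bar x)=2S^e(\bar x)a_1\cdot a_2$. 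Since \eqref{trans-br1} makes the left-hand side strictly negative, at least one of $\gamma-\beta$ and $S^e(\bar x)$ is nonzero. If $\gamma-\beta\neq0$, some $[\sigma_0,\sigma_j]\cdot\nabla u(\bar x)\neq0$, which is the first alternative in Theorem~\ref{thmnas}; if instead $S^e(\bar x)\neq0$, then $S(\bar x)$ is not symmetric, and by Theorem~\ref{thm3} the matrix $K(\bar x)$ fails to be positive semidefinite, which is the second alternative there. Either way, together with the standing hypothesis $\nabla(\nabla u\cdot\sigma_0)\cdot\sigma_0(\bar x)\le0$, the full sufficient condition of Theorem~\ref{thmnas} is met.

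I do not expect a serious analytic obstacle once Proposition~\ref{nec-cond-2} is in hand, since the argument is essentially a bookkeeping reduction; the one place requiring care is the bilinear expansion of the bracket and the precise routing of the two outcomes of Proposition~\ref{nec-cond-2} into the two sufficient conditions. In particular I would want to verify cleanly that the drift interactions $[\sigma_0,\sigma_j]$ account exactly for $\gamma-\beta$, and that the purely skew part $S^e(\bar x)$ is precisely the obstruction to symmetry of $S(\bar x)$ that Theorem~\ref{thm3} converts into the failure of positive semidefiniteness of $K(\bar x)$. A secondary point worth stating explicitly is the role of the assumption $\alpha=\nabla(\nabla u\cdot\sigma_0)\cdot\sigma_0(\bar x)\le0$: it is not used to derive the dichotomy, but it is exactly what guarantees that the bracket-or-$K$ conclusion recovers the \emph{full} hypothesis of Theorem~\ref{thmnas} rather than only its second-order part.
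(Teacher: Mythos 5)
Your proposal is correct and follows essentially the same route as the paper: invoke Proposition \ref{nec-cond-2} to get the dichotomy \eqref{trans-br1}/\eqref{trans-br2}, send \eqref{trans-br2} to Corollary \ref{coronefield}, and expand the bracket of the two affine fields to route \eqref{trans-br1} into one of the two alternatives of Theorem \ref{thmnas} (via \eqref{eqbetagamma} and Lemma \ref{lemtech}(ii) together with Theorem \ref{thm3}). Your explicit verification of the hypotheses of Proposition \ref{nec-cond-2} and your remark on the role of $\alpha\le0$ are sound additions, but the argument is the paper's.
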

\begin{proof}
From Proposition \ref{nec-cond-2} we know that (\ref{eqnewsestimate}) implies either (\ref{trans-br1}) or (\ref{trans-br2}).
We start by supposing that (\ref{trans-br2}) is satisfied. Then that is precisely the sufficient condition in Corollary \ref{coronefield} for estimate (\ref{eqestimate}).

If instead (\ref{trans-br1}) holds, then there are $a_1,a_2\in B_1(0)$ such that
\begin{equation}\label{eqnecessaryna}
0>\nabla u(\bar x)\cdot [\sigma_0+\sigma a_1,\sigma_0+\sigma a_2](\bar x)=\nabla u(\bar x)\cdot ([\sigma_0,\sigma a_2](\bar x)+[\sigma a_1,\sigma_0](\bar x)+[\sigma a_1,\sigma a_2](\bar x)).
\end{equation}
Then one of the three terms in the right hand side of (\ref{eqnecessaryna}) is strictly negative.
We analyse them separately. If either $\nabla u(\bar x)\cdot [\sigma_0,\sigma a_2](\bar x)<0$ or $\nabla u(\bar x)\cdot [\sigma a_1,\sigma _0](\bar x)<0$, then the sufficient condition in case 1 in Theorem \ref{thmnas} is satisfied. If instead $\nabla u(\bar x)\cdot [\sigma a_1,\sigma a_2](\bar x)<0$, then the sufficient condition of case 3 is satisfied since $S(\bar x)$ is not symmetric. Hence (\ref{eqestimate}) is satisfied.
\end{proof}

\section{Examples}

\begin{exa}\label{ex1}
In this example we want to show that our condition for second order attainability can be satisfied by a single vector field. Consider the symmetric system
\begin{equation}
\left\{
\begin{array}{ll}
\dot x_t=-ay_t, \\
\dot y_t=ax_t  \\
(x_0,y_0)\in\R^2.  
\end{array}
\right.
\end{equation}
Here $a\in [-1,1]$. Let $u(x,y)=y-1$, $\sigma(x,y)={}^t(-y,x)$. Around $(x_0,y_0)=(0,1)$ we want to reach the target $\{(x,y):y\leq1\}$. Since $\nabla u(x,y)=(0,1)$ then $\nabla u(0,1)\sigma(0,1)=0$ and first order conditions do not apply. Instead we compute $S(x,y)=-y$, notice that it is scalar (symmetric) and negative for $y=1$. Indeed in this case
\begin{equation}\label{eqkk}
K=\left(\begin{array}{cc} -1\quad&-1\\-1&-1
\end{array}\right),\end{equation}
and $K$ has $(1,1)$ as an eigenvector of $-2$ as eigenvalue. Therefore the target is small time locally attainable at $(0,1)$ and we reach the target following control $a\equiv1$. 
\end{exa}

\begin{exa}\label{ex2}
(This example comes from \cite{ma2}). In $\R^2$, take $\sigma={}^t(0,1)$, $f(x,a)=\sigma a$ and $u(x,y)=\frac{1-x^2-y^2}2$ so there is a unique vector field which is constant. However $\nabla u\cdot\sigma(x,y)=-y$, therefore we have first order attainability of the $0-$sublevel set of $u$ unless $y=0$. At every point, in particular at $(1,0)$ we have $S=-1<0$ so there is second order attainability of $\{x:u(x,y)\leq0\}=\R^2\backslash B_1((0,0))$. Matrix $K$ is as in (\ref{eqkk}).
\end{exa}

\begin{exa}\label{ex3}
Consider in $\R^2$ the nonlinear affine system where
$$\sigma_0(x,y)=\left(
\begin{array}{cc}
y\\  0 
\end{array}
\right),\quad\sigma(x,y)=\left(
\begin{array}{cc}
 0\\ 1
\end{array}
\right),\quad u(x,y)=\frac{x^2+y^2}2,
$$
and one of its positive sublevel sets as a target.
Then $\nabla u\;\sigma (x,y)=(xy\quad y)$ which vanishes at points where $y=0$. So we consider $y=0$ and impose $x\neq0$, otherwise the gradient of $u$ vanishes, and look for second order conditions. Computing $\tilde S$ at such points we get
$$\tilde S(x,0)=\left(
\begin{array}{cc}
0\quad & x  \\
 0& 1
\end{array}\right).$$
Therefore in our notations $\alpha=0$, $\beta-\gamma=x$ and we know that the system satisfies an attainability condition of second order at $(x,0)$. By the proof of Theorem \ref{thmnas}, $a_1=-1$, $a_2=1$ are controls that lead the system to the target in one switch at $(x,0)$, for $x>0$.

Now consider the target $\mathcal T=\{(x,y):x^2+y^2\leq r^2\}$ and observe that $\nabla (x,y)\cdot(\sigma_0(x,y)\pm\sigma(x,y))=(x\pm1)y$.
Therefore if $r>1$ and if $(x,y)$ is a point of the boundary of the target in the neighborhood of $(r,0)$, when $y<0$ Petrov condition holds while for $y>0$ all available vector fields strictly point outward the target and therefore the target is not STLA at such points. Therefore a second order sufficient condition holds at $(r,0)$ but the target is not STLA at all points of its neighborhood. 
\end{exa}

\begin{exa}\label{ex4}(Heisenberg system)
In $\R^3$ consider the system where
$$\sigma(x,y,z)=\left(
\begin{array}{cc}
1\quad & 0  \\
 0& 1\\
 y&-x
\end{array}
\right),\quad u(x,y)=\frac{x^2+y^2+z^2}2.
$$
Then $\nabla u\;\sigma (x,y)={}^t(x+yz, y-xz)$ which vanishes at points where $x=y=0$, and we select $z\neq0$ because otherwise the gradient of $u$ vanishes. Computing $S$ at such points we get
$$S(0,0,z)=\left(
\begin{array}{cc}
1\quad & -z  \\
 z& 1
\end{array}\right).$$
which again is not symmetric and we know that the sublevel sets of $u$
are STLA for the system with a second order condition. In this case we computed the minimal eigenvalue at $z=1$ which has multiplicity 2
$$K(0,0,1)=\left(
\begin{array}{cccc}
1\quad & 0  \quad &1\quad &-1\\
 0& 1&1&1\\
1&1&1&0\\
-1&1&0&1
\end{array}
\right),
$$
and $\lambda_{min}=1-\sqrt{2}<0$. Eigenvectors providing the highest decrease rate of $u$ are $(\frac{\sqrt{2}}2,-\frac{\sqrt{2}}2,0,1)$, $(-\frac{\sqrt{2}}2,-\frac{\sqrt{2}}2,1,0)$ and the vector space generated by them. Each of the two pairs of coordinates, e.g. $(\frac{\sqrt{2}}2,-\frac{\sqrt{2}}2),(0,1)$, give us controls to determine the two vector fields that we need to use to achieve attainability of the sublevel sets of $u$ with maximal rate among trajectories with at most one switch.
\end{exa}

\begin{exa}\label{ex5}(Convexified Reeds Shepp system)
In $\R^3$ take the symmetric system where
$$\sigma(x,y,z)=\left(
\begin{array}{cc}
\cos{z}\quad & 0  \\
 \sin{z}& 0\\
 0&1
\end{array}
\right),\quad u(x,y)=\frac{x^2+y^2+z^2}2.
$$
Therefore $\nabla u\;\sigma (x,y,z)={}^t(x\cos{z}+y\sin{z}, z)$ which vanishes at points where $x=z=0$, and we add $y\neq0$ because otherwise the gradient of $u$ vanishes. Computing $S$ at such points we get
$$S(0,y,0)=\left(
\begin{array}{cc}
1\quad & y  \\
 0& 1
\end{array}\right).$$
which is not symmetric so the sublevel sets of $u$ are STLA around $(0,y,0)$, $y\neq0$.
\end{exa}

\begin{exa}\label{ex6}
This example is taken from \cite{ma}. Consider the nonlinear affine system in $\R^3$ where
$$\sigma_0(x,y,z)\left(\begin{array}{c}
-y/12\\x/12\\0\end{array}\right),\quad\sigma(x,y,z)=\left(\begin{array}{cc}
xz\quad&0\\yz&0\\0&1\end{array}\right),$$
and the target is a positive sublevel set of the function $u(x,y,z)=(x^2+y^2)/2$. Then 
$$\nabla u(x,y,z)\tilde\sigma(x,y,z)=(0,(x^2+y^2)z,0).$$
The only points where a Petrov condition is not satisfied are on the plane $z=0$, and we add $x^2+y^2>0$ to avoid the singular $z-$axis.
At such points, we can compute
$$\tilde S(x,y,0)=\left(\begin{array}{ccc}0\quad&0\quad&0\\0&2(x^2+y^2)z^2&x^2+y^2\\0&0&0\end{array}\right).$$
Then $\alpha=0$, $\beta=0=\gamma$ but $S(x,y,0)$ is not symmetric and therefore the system satisfies a second order controllability condition and the sublevel set of $u$ is STLA at $(x,y,0)$.
\end{exa}


\end{document}